\numberwithin{equation}{section} 
\newtheorem{theorem}{Theorem}[section]
\newtheorem{lemma}[theorem]{Lemma}
\newtheorem{corollary}[theorem]{Corollary}
\newtheorem{definition}[theorem]{Definition}
\newtheorem{remark}[theorem]{Remark}
\numberwithin{equation}{section}
\newcommand{\bfve}{\bfv_\epsilon}
\newcommand{\bfvm}{\bfv^{m}}
\newcommand{\bfom}{\bfomega}
\newcommand{\bfoe}{\bfomega_\epsilon}
\newcommand{\bfon}{\bfomega^m}
\newcommand{\nl}{\bfomega\times\bfv}
\newcommand{\nlm}{\bfomega^m\times\bfv^m}
\newcommand{\nlmn}{\bfomega^m_{n}\times\bfv^m_{n}}
\newcommand{\nle}{\bfoe\times\bfve}
\providecommand{\Bog}{\mathrm{Bog}}
\providecommand{\curl}{\mathrm{curl}\,}
\providecommand{\divergence}{\mathrm{div}}
\providecommand{\Div}{\divergence\,}
\providecommand{skp}[2]{{\langle{#1},{#2}\rangle}}
 \providecommand{\fdg}{{\,\big|\,}}
\providecommand{\R}{\setR}
\providecommand{\N}{{\mathbb{N}}}
\providecommand{\Z}{{\mathbb{Z}}}
\providecommand{\ep}{\curl}
\newcounter{formel}
\providecommand{\Bog}{\ensuremath{\text{\rm Bog}}}
\providecommand{\Rdr}{{\setR^3}}
\begin{document}

\begin{frontmatter}

  \title{\textbf{On the existence of weak solutions for the steady Baldwin-Lomax model and
    generalizations}\footnote{Dedicated to the memory of Christian G. Simader}} 

  \author{Luigi C.~Berselli\corref{cor1}} \ead{luigi.carlo.berselli@unipi.it}

  \address{Dipartimento di Matematica, Universit\`a di Pisa, Via F.~Buonarroti 1/c,
    I-56127 Pisa, Italy}

\author{Dominic~Breit} \ead{d.breit@hw.ac.uk}

\address{Heriot-Watt University, Department of Mathematics, Riccarton, EH14 4AS Edinburgh}

\begin{abstract}
  In this paper we consider the steady Baldwin-Lomax model, which is a rotational model
  proposed to describe turbulent flows at statistical equilibrium. The Baldwin-Lomax model
  is specifically designed to address the problem of a turbulent motion taking place in a
  bounded domain, with Dirichlet boundary conditions at solid boundaries. The main
  features of this model are the degeneracy of the operator at the boundary and a
  formulation in velocity/vorticity variables. The principal part of the operator is
  non-linear and it is degenerate, due to the presence (as a coefficient) of a power of
  the distance from the boundary: This fact makes the existence theory naturally set in
  the framework of appropriate weighted-Sobolev spaces. 
\end{abstract}
\begin{keyword}
  Turbulence, generalized non-Newtonian fluids, weak solutions, degenerate operators,
  weighted spaces.

 \MSC  76D05 \sep 76F40 \sep 76F65 \sep 76A05 \sep 35J70
\end{keyword}
\end{frontmatter}
\section{Introduction}
\label{sec:intro}
In this paper we study the model (and some of its 
variants which are interesting from the mathematical point of view)  introduced by Balwdin and
Lomax~\cite{BL1978}
\begin{equation}
  \label{eq:Balwdin-Lomax}
  \left\{\begin{aligned} 
      -\nu_{0}\,\Div\bfD\bfv+(\nabla\bfv)\bfv +\curl\big(d^2|\curl\bfv|\curl\bfv\big)
      +\nabla \pi&=\bff& \mbox{in        $\Omega$,}
      \\
      \Div \bfv&=0& \mbox{in $\Omega$,}
      \\
      \bfv&=\mathbf{0}& \mbox{on $\partial\Omega$,}
    \end{aligned}\right.
\end{equation}
to describe turbulent fluids at the statistical equilibrium, where $d$ is the distance
from the boundary. We recall that, starting from the work of O.~Reynolds in the 19th
Century, a classical paradigm is that of decomposing the velocity into the sum of a mean
part and (turbulent) fluctuations, see~\cite{BIL2006}. One basic question is how to model
the effect of the smaller scales on the larger ones. The Boussinesq assumption suggests
that --in average--this produces an additional turbulent viscosity $\nu_{T}$, which is
proportional to the mixing length and to the kinetic energy of fluctuations (at least in
the Kolmogorov-Prandtl approximation). In the analysis of Baldwin and Lomax, this leads to
a turbulent viscosity of the form
\begin{equation*}
  \nu_{T}(\bfv(\bfx))\sim \ell^{2}(\bfx)|\curl\bfv(\bfx)|,
\end{equation*}
where $\ell$ is a multiple of the distance from the boundary and $\curl
\bfv=\nabla\times\bfv$, hence arriving to the model~\eqref{eq:Balwdin-Lomax} when the
equations for the turbulent flow are considered in the rotational formulation. (Mean velocities
denoted from now on as $\bfv$).

The Baldwin-Lomax model~\eqref{eq:Balwdin-Lomax} has been recently revisited --in the
unsteady case-- by Rong, Layton, and Zhao~\cite{RLZ2019}, in order to take into account
also of the effects of back-scatter. This involves, in addition to the usual time
derivative $\frac{\partial\bfv}{\partial t}$, a dispersive term of the form
\begin{equation*}
  \curl\Big(\ell^{2}(\bfx)\,\curl\frac{\partial\bfv}{\partial t}(t,\bfx)\Big),
\end{equation*}
resembling that appearing in Kelvin-Voigt materials. Also in this case the problem has
some degeneracy at the boundary. Different mathematical tools are required to handle the
above term: being of the Kelvin-Voigt type, the latter differential operator is linear and
not dissipative, but instead it is dispersive. Further details, and its analysis in
connection with Turbulent-Kinetic-Energy (TKE) models are studied in~\cite{ABLN2019}, in
the case of a turbulent viscosity depending only on the turbulent kinetic energy, but not
on $\curl\bfv$. Related results involving a selective anisotropic turbulence model can be
found also in~\cite{CAB2020}. 

Here, we consider --as a starting point-- the problem at statistical equilibrium. We study
just the steady case, which contains nevertheless several peculiar properties; the methods
and techniques involved are rather different than those used in the previous mathematical
theory of unsteady Baldwin-Lomax type models in~\cite{ABLN2019,RLZ2019}.

The class of problems we study is that of finding a velocity field
$\bfv:\Omega\rightarrow\R^3$ and a pressure function $\pi:\Omega\rightarrow\R$ such that
the following boundary value problem for a nonlinear system of partial differential
 equations is satisfied
 \begin{align*}
   \left\{
     \begin{aligned}
       -\nu_{0}\,\Div\bfD\bfv+   \curl\bfS+(\nabla\bfv)\,\bfv&= -\nabla \pi+\bff& \mbox{in
         $\Omega$,}
       \\
       \Div \bfv&=0\qquad& \mbox{in $\Omega$,}
       \\
       \bfv&=\mathbf{0}\qquad\quad& \mbox{ \,on $\partial\Omega$.}
     \end{aligned}
   \right.
 \end{align*}
 Here $\Omega$ denotes a bounded smooth domain in $\R^3$, and $\bff:\Omega\rightarrow\R^3$
 is the volume force and $\nu_{0}\geq0$ is the kinematic viscosity.
 
As a generalized Baldwin-Lomax model, we will also consider  the
stress tensor $\bfS:\, \Omega\rightarrow\R^{3\times 3}$  given by
\begin{align*}
\bfS=\bfS(\bfx,\ep(\bfv))=d(\bfx)^\alpha(\kappa+|\curl(\bfv)|)^{p-2}\curl(\bfv),
\end{align*}
where $d(\bfx)=\mathrm{dist}(\bfx,\partial\Omega)$, and $\alpha>0$, $p>1$, $\kappa\geq0$ 
are given constants.

We will prove the existence of weak solutions for various models and highlight the role of
the parameters $p,\,\alpha$, and $\nu_{0}$. The analysis requires substantial changes in
the mathematical approach depending on the range of these constants.

The main result we prove is the existence of weak solution in appropriate (weighted) function
spaces. The results are obtained by using a classical Galerkin approximation procedure and
the passage to the limit is done by means of monotonicity and truncation methods typical
of the analysis of non-Newtonian fluids, see for instance the reviews
in~\cite{Bre2017,Ruz2013}.
\section*{Acknowledgments}
The research of Luigi C. Berselli that led to the present paper was partially supported
by a grant of the group GNAMPA of INdAM and by the project of the University of Pisa
within the grant PRA$\_{}2018\_{}52$~UNIPI \textit{Energy and regularity: New techniques
  for classical PDE problems.} 

The authors are grateful to R.~Lewandowski for having introduced us to the topic and suggested
to study the mathematical properties of the Baldwin-Lomax model.

\section{Functional setting}
In the sequel $\Omega\subset\R^{3}$ will be a smooth and bounded open set, as usual we
write $\bfx=(x_{1},x_{2},x_{3})=(x',x_{3})$ for all $\bfx\in \R^{3}$.  In particular, we
assume that the boundary $\partial\Omega$ is of class $C^{0,1}$, such that the normal unit
vector $\bfn$ at the boundary is well defined and other relevant properties hold true. We
recall a domain is of class $C^{k,1}$ if for each point $P\in\partial\Omega$ there are
local coordinates such that in these coordinates we have $P=0$ and $\partial\Omega$ is
locally described by a $C^{k,1}$-function, i.e.,~there exist $R_P,\,R'_P \in
(0,\infty),\,r_P\in (0,1)$ and a $C^{k,1}$-function $a_P:B_{R_P}^{2}(0)\to B_{R'_P}^1(0)$
such that
\begin{itemize}
\item[i)] $\bfx\in \partial\Omega\cap (B_{R_P}^{2}(0)\times B_{R'_P}^1(0))\
  \Longleftrightarrow \ x_3=a_P(x')$,
\item[ii)] $\Omega_{P}:=\{(x\in \R^{3}\fdg x'\in B_{R_P}^{2}(0),\
  a_P(x')<x_3<a_P(x')+R'_P\}\subset \Omega$,
\item [iii)] $\nabla a_P(0)=\bfzero,\text{ and }\forall\,x'  \in B_{R_P}^{2}(0)\quad
  |\nabla a_P(x')|<r_P$, 
\end{itemize}
where $B_{r}^k(0)$ denotes the $k$-dimensional open ball with center
$0$ and radius ${r>0}$.  

We also define the distance $d(\bfx,A)$ of a point from a closed set
$A\subset\R^{3}$ as follows 
\begin{equation*}
  d(\bfx,A):=\inf_{\bfy\in A}|\bfx-\bfy|,
\end{equation*}
and we denote by $d(\bfx)$ the distance of $\bfx$ from the boundary of $\Omega$
\begin{equation*}
  d(\bfx):=d(\bfx,\partial\Omega).
\end{equation*}
We recall a well-known lemma about the distance function $d(\bfx)$, see for instance
Kufner~\cite{Kuf1985}.

\begin{lemma}
\label{lem:distance}
  Let $\Omega$ be a domain of class $C^{0,1}$, then there exist constants
  $0<c_{0},c_{1}\in\R$ such that 
  \begin{equation*}
    c_{0}\,d(\bfx)\leq|a(x')-x_{3}|\leq c_{1}\,d(\bfx)\qquad      \forall\,\bfx=(x',x_{3})\in
    \Omega_{P}. 
  \end{equation*}
\end{lemma}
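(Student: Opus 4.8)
The plan is to work locally near a boundary point $P\in\partial\Omega$ using the coordinates provided by the $C^{0,1}$-parametrization $a=a_P$, and to compare the Euclidean distance $d(\bfx)$ with the vertical distance $|a(x')-x_3|$ on the cylinder $\Omega_P$. The upper bound is immediate: for $\bfx=(x',x_3)\in\Omega_P$ the point $(x',a(x'))$ lies on $\partial\Omega$, so by definition of the distance as an infimum, $d(\bfx)\le|\bfx-(x',a(x'))|=|x_3-a(x')|$; hence one may take $c_1=1$. The content of the lemma is therefore the lower bound $|a(x')-x_3|\le c_1^{-1}d(\bfx)$, i.e.\ that the Euclidean distance cannot be much smaller than the vertical one, which is where the Lipschitz character of $a$ (condition (iii), $|\nabla a|<r_P<1$) must be used.

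For the lower bound, first I would fix $\bfx=(x',x_3)\in\Omega_P$ and pick $\bfy=(y',y_3)\in\partial\Omega$ achieving (or nearly achieving) the infimum $d(\bfx)=|\bfx-\bfy|$; after shrinking $R_P$ if necessary one checks that such a near-minimizer may be taken inside the coordinate patch, so that $y_3=a(y')$. Then write
\begin{align*}
  |x_3-a(x')|&\le|x_3-a(y')|+|a(y')-a(x')|\\
  &\le|x_3-y_3|+\mathrm{Lip}(a)\,|x'-y'|,
\end{align*}
where $\mathrm{Lip}(a)\le r_P$ by (iii). Both $|x_3-y_3|$ and $|x'-y'|$ are bounded by $|\bfx-\bfy|=d(\bfx)$, so $|x_3-a(x')|\le(1+r_P)\,d(\bfx)$, which gives the lower bound with $c_0=(1+r_P)^{-1}$.

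The main obstacle — and the only genuinely nontrivial point — is the claim that a (near-)minimizing $\bfy$ can be chosen with $y'\in B_{R_P}^2(0)$ and $y_3=a(y')$, i.e.\ that the nearest boundary point to $\bfx\in\Omega_P$ stays in the graph portion of $\partial\Omega$ rather than escaping to a far-away part of the boundary. This is handled by a standard localization: if $\bfx$ is close enough to the graph (which one can arrange by taking $\Omega_P$ thin, shrinking $R_P'$, and noting $d(\bfx)\le|x_3-a(x')|$ is then small), any boundary point at comparable distance must lie in the patch, because points of $\partial\Omega$ outside the patch are uniformly bounded away from $\bfx$; for the finitely many remaining $\bfx$ staying a fixed distance from $\partial\Omega$ the inequality is trivial with a crude constant. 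One then absorbs everything into constants $c_0,c_1$ depending only on $\Omega$ (via a finite cover of $\partial\Omega$ by such patches and compactness), and the stated two-sided estimate follows. I would only sketch this localization and refer to Kufner~\cite{Kuf1985} for the routine details.
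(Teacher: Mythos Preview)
The paper does not prove this lemma; it simply records it as well known and refers to Kufner~\cite{Kuf1985}. Your argument is the standard one and is correct in substance: $d(\bfx)\le|a(x')-x_3|$ because $(x',a(x'))\in\partial\Omega$, and the reverse estimate follows from the triangle inequality together with the Lipschitz bound $|\nabla a_P|<r_P$, after the routine localization of the nearest boundary point to the coordinate patch that you outline.

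One clerical slip to fix: you have the roles of $c_0$ and $c_1$ interchanged relative to the displayed inequality $c_0\,d(\bfx)\le|a(x')-x_3|\le c_1\,d(\bfx)$. The easy bound $d(\bfx)\le|a(x')-x_3|$ gives $c_0=1$ (not $c_1=1$), and your harder estimate $|a(x')-x_3|\le(1+r_P)\,d(\bfx)$ gives $c_1=1+r_P$ (not $c_0=(1+r_P)^{-1}$). This does not affect the mathematics.
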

For our analysis we will use the customary Lebesgue $(L^{p}(\Omega),\|\,.\,\|_{p})$ and
Sobolev spaces $(W^{k,p}(\Omega),\|\,.\,\|_{k,p})$ of integer index $k\in \N$ and with
$1\leq p\leq\infty$. We do not distinguish scalar and vector valued spaces, we just use
boldface for vectors and tensors.  We recall that $L^{p}_{0}(\Omega)$ denotes the subspace
with zero mean value, while $W^{1,p}_{0}(\Omega)$ is the closure of the smooth and
compactly supported functions with respect to the $\|\,.\,\|_{1,p}$-norm. If $\Omega$ is
bounded and if $1<p<\infty$, the following two relevant inequalities hold true:

\noindent 1) the \Poincare{} inequality
\begin{equation}
  \label{eq:Poincare}
  \exists\, C_{P}(p,\Omega)>0:\qquad \|\bfu\|_{p}\leq C_{P}\|\nabla
  \bfu\|_{p}\qquad\forall\,\bfu\in W^{1,p}_{0}(\Omega);
\end{equation}
2) the  Korn inequality
\begin{equation}
  \label{eq:Korn}
  \exists\, C_{K}(p,\Omega)>0:\qquad \|\nabla\bfu\|_{p}\leq C_{K}\|\bfD
  \bfu\|_{p}\qquad\forall\,\bfu\in W^{1,p}_{0}(\Omega),
\end{equation} 
where $\bfD\bfu$ denotes the symmetric part of the matrix of derivatives $\nabla\bfu$. 

As a combination of~\eqref{eq:Poincare}-\eqref{eq:Korn} we also have that for $1\leq p<3$
the Sobolev-type inequality
\begin{equation}
  \label{eq:Sobolev}
  \exists\, C_{S}>0:\qquad \|\bfu\|_{p^{*}}\leq C_{S}\|\bfD \bfu\|_{p},
\end{equation} 
holds true for all $\bfu\in W^{1,p}_{0}(\Omega)$, where $p^{*}:=\frac{3p}{3-p}$.

The Korn inequality allows to control the full gradient in $L^p$ by its symmetric part,
for functions which are zero at the boundary. Classical results (cf.~Bourguignon and
Brezis~\cite{BB1974}) concern controlling the full gradient with curl \&{} divergence. The
following inequality holds true: For all $s\geq1$ and $1<p<\infty$, there exists a
constant $C=C(s,p,\Omega)$ such that,
\begin{equation*}
  \|\bfu\|_{s,p}\leq C\Big[\|\Div \bfu\|_{s-1,p}+\|\curl
  \bfu\|_{s-1,p}+\|\bfu\cdot \bfn\|_{s-1/p,p,\partial\Omega}+\|\bfu\|_{s-1,p}\Big],
\end{equation*}
for all $\bfu\in (W^{s,p}(\Omega))^3$, where $\|\,.\,\|_{s-1/p,p,\partial\Omega}$ is the trace
norm as explained below. This same result has been later improved by von Wahl~\cite{vWah1992} obtaining,
under geometric conditions on the domain, the following estimate without lower order
terms: Let $\Omega$ be such that $b_1(\Omega)=b_2(\Omega)=0$, where $b_i(\Omega)$ denotes
the i-th Betti number, that is the the dimension of the i-th homology group $H^i(\Omega,
\Z)$. Then, there exists $C=C(p,\Omega)$ such that
\begin{equation}
  \label{eq:div-curl}
  \|\nabla \bfu\|_{p}\leq C\big(\|\Div \bfu\|_p+\|\curl \bfu\|_p\big),
\end{equation}
for all $\bfu\in (W^{1,p}(\Omega))^3$ satisfying either $(\bfu\cdot\bfn)_{|\partial\Omega}=0$ or
$(\bfu\times \bfn)_{|\partial\Omega}=0$. 

In the trace-norm fractional derivative appear in a natural way. Nevertheless, we need also
to handle fractional spaces $W^{r,p}(\Omega)$, which are defined by means of the semi-norm
\begin{equation*}
  [u]^{p}_{s,p}:=\int_{\Omega}\int_{\Omega}\frac{|u(\bfx)-u(\bfy)|^{p}}{|\bfx-\bfy|^{3+s p}}\,\mathrm{d}\bfx
  \mathrm{d}\bfy\qquad\text{for } 0<s<1 ,
\end{equation*}
as made by functions $u\in W^{[r],p}(\Omega)$, such that
$[D^{\alpha}u]_{r-[r],p}=[D^{\alpha}u]_{s,p}<\infty$, for all multi-indices $\alpha$ such
that $|\alpha|=[r]$ (for the trance norm one has to integrate instead with respect to the
2-dimensional Hausdorff measure). The main result we need is the following generalization
of the classical Hardy inequality: Let $u\in L^{p}(\Omega)$, then
  \begin{equation}
    \label{eq:characterization-fractional}
    \frac{u}{d^{s}}\in L^{p}(\Omega)\Longleftrightarrow u\in W^{s,p}_{0}(\Omega)\quad
\text{for all }    0<s<1,\text{ with }     s-\frac{1}{p}\not=\frac{1}{2}.
  \end{equation}
\subsection{Weighted spaces}
\label{sec:weighted}
Since we have a boundary value problem with an operator which is space dependent, a
natural functional setting would be that of weighted Sobolev spaces. For this reason we
define now the relevant spaces we will use. We follow the notation from
Kufner~\cite{Kuf1985}.

We start by defining weighted Sobolev spaces. Let
$w(\bfx):\,\Omega\to\R^{+}$ be given a function (weight) which is non-negative and a.e. everywhere positive. We define,
for $1\leq p<\infty$, the weighted space $L^{p}(\Omega,w)=L^{p}_{w}(\Omega)$ as follows
\begin{equation*}
  L^{p}(\Omega,w):=\left\{\bff:\ \Omega\to\R^{n} \text{ measurable: }
    \int_{\Omega}|\bff(\bfx)|^{p}\, w(\bfx)\,\mathrm{d}\bfx<\infty\right\} .
\end{equation*} 
The definition is particularly relevant if it allows to work in the standard setting of
distributions $\mathcal{D}'(\Omega)$: for $p>1$ we have
\begin{equation*}
  w^{-1/(p-1)}\in L^{1}_{loc}(\Omega)\quad\Rightarrow\quad  L^{p}(\Omega,w)\subset
  L^{1}_{loc}(\Omega)\subset \mathcal{D}'(\Omega).
\end{equation*}
It turns out that
$C^{\infty}_{0}(\Omega)$ is dense in $L^{p}(\Omega,w)$ if the weight satisfies at least
$w\in L^{1}_{loc}(\Omega)$, see~\cite{Kuf1985}.  In addition, $L^{p}(\Omega,w)$ is a
Banach space when equipped with the norm
\begin{equation*}
  \|\bff\|_{p,w}:=\Bigg(\int_{\Omega}|\bff(\bfx)|^{p}w(\bfx)\,\mathrm{d}\bfx\Bigg)^{1/p}. 
\end{equation*}
Clearly if $w(\bfx)\equiv 1$ then $L^{p}(\Omega,w)=L^{p}(\Omega)$. 

\smallskip 

Next, we define weighted Sobolev spaces
\begin{equation*}
  W^{k,p}(\Omega,w):=\left\{\bff:\ \Omega\to\R^{n}: D^{\alpha}\bff\in
    L^{p}(\Omega,w)\text{ for all }\alpha \text{ s.t. } |\alpha|\leq k\right\} ,
\end{equation*} 
equipped with the norm 
\begin{equation*}
  \|\bff\|_{k,p,w}:=\Bigg(\sum_{|\alpha|\leq k}\|D^{\alpha}\bff\|_{p,w}^{p}\Bigg)^{1/p}.
\end{equation*}
As expected, we define $W^{k,p}_{0}(\Omega,w)$ as follows
\begin{equation*}
  W^{k,p}_{0}(\Omega,w):=\overline{\left\{\bfphi\in
      C^{\infty}_{0}(\Omega)\right\}}^{\|\,.\,\|_{k,p,w}}. 
\end{equation*}

\medskip

In our application the weight $w(\bfx)$ will a power of the distance $d(\bfx)$ from the
boundary. Consequently, we specialize to this setting and give specific notions regarding
these so-called \textit{power-type weights}, see Kufner~\cite{Kuf1985}. First, it turns
out that $W^{k,p}(\Omega,d^{\alpha})$ is a separable Banach spaces provided $\alpha\in\R$,
$k\in\N$ and $1\leq p<\infty$. In this special setting, several results are stronger or
more precise due to the inclusion $L^{p}(\Omega,d^{\alpha})\subset L^{p}_{loc}(\Omega)$
for all $\alpha\in \R$.

Probably one of the most relevant properties is the embedding
\begin{equation}
\label{eq:embedding-L1}L^{p}(\Omega,d^{\alpha})\subset L^{1}(\Omega)\quad\text{if}\quad
\alpha<p-1.
\end{equation}
It follows directly from H\"older's inequality as follows
\begin{equation*}
  \int_{\Omega}|f
  |\,\mathrm{d}\bfx=\int_{\Omega}d^{\alpha/p}|f
  |d^{-\alpha/p}d\bfx
  \leq\Big(\int_{\Omega}d^{\alpha}|f|^{p}d\bfx\Big)^{1/p} \Big(\int_{\Omega}d^{-\alpha
    p'/p}d\bfx\Big)^{1/p'},   
\end{equation*}
using that the latter integral is finite if and only if 
\begin{equation*}
  \frac{ \alpha\,p'}{p}=\frac{\alpha}{p-1}<1
\end{equation*}
by Lemma~\ref{lem:distance}. Moreover, as in \cite[Prop.~9.10]{Kuf1985} it follows that
the quantity $\int_{\Omega}d^{\alpha}|\nabla \bff|^{p}\,\mathrm{d}\bfx$ is an equivalent norm in
$W^{1,p}_{0}(\Omega,d^{\alpha})$,  provided that $\alpha<p-1$.
\begin{remark}
\label{rem:alpha1}
The above results explain the critical role of the power $\alpha=p-1$ and highlight the
fact that the original Balwdin-Lomax model is exactly that corresponding to the critical
exponent. For the applications we have in mind the value of $\alpha$ is not so strictly
relevant and in fact, following the same procedure as in~\cite{ABLN2019}, it also makes
sense to consider turbulent viscosity as follows
  \begin{equation}
    \label{eq:Kelvin-Voigt}
    \nu_{T}(\bfv( \bfx))=\ell_{0}\,\ell( \bfx)|\curl\bfv( \bfx)|,
  \end{equation}
  for some $\ell_{0}\in\R^{+}$.
\end{remark}
Appropriate versions of the Sobolev inequality~\eqref{eq:Sobolev} are valid also for
weighted Sobolev spaces:
\begin{lemma}
  \label{lem:Sobolev}
  There exists a constant $C=C(\Omega,\delta,p)$, such that 
  \begin{equation}
    \label{eq:gen-Sobolev}
    \Big\|u(\bfx)-\dashint_\Omega u(\bfy)\,\mathrm{d}\bfy\Big\|_q\leq
    C\|d^\delta(\bfx) \nabla u(\bfx)\|_p=\|\nabla u \|_{p,d^{p \delta}},
  \end{equation}
  for all $u\in
    W^{1,p}(\Omega,d^{\delta p})$, where $q\leq \frac{3p}{3-p(1-\delta)}$.
\end{lemma}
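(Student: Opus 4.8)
The plan is to deduce the weighted inequality \eqref{eq:gen-Sobolev} from the classical (unweighted) Sobolev embedding by a localization and scaling argument, combined with the comparability of $d(\bfx)$ with the local graph height from Lemma~\ref{lem:distance}. First I would reduce to the case $\dashint_\Omega u = 0$ by replacing $u$ with $u - \dashint_\Omega u$, which leaves the right-hand side unchanged. Then I would employ a Whitney-type decomposition of $\Omega$: cover $\Omega$ by a countable family of balls (or cubes) $\{B_j\}$ with radii $r_j \sim d(\bfx_j) \sim \operatorname{dist}(B_j,\partial\Omega)$ and bounded overlap, so that on each $B_j$ the weight $d(\bfx)^{\delta p}$ is comparable to the constant $r_j^{\delta p}$. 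On a fixed ball $B_j$, the standard Sobolev--Poincar\'e inequality gives $\|u - \dashint_{B_j} u\|_{L^q(B_j)} \le C r_j^{1 + 3/q - 3/p}\|\nabla u\|_{L^p(B_j)}$, with $q \le p^\ast = \frac{3p}{3-p}$; matching the scaling exponent $1 + 3/q - 3/p$ against the weight exponent $\delta$ present on the right-hand side is exactly what forces the admissible range $q \le \frac{3p}{3-p(1-\delta)}$ claimed in the statement (this is the bookkeeping that makes the two sides scale identically).

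The technical heart is then to sum the local estimates into a global one. Squaring/raising to the $q$-th power and using bounded overlap handles the gradient side directly, since $\sum_j \int_{B_j} d^{\delta p}|\nabla u|^p \lesssim \int_\Omega d^{\delta p}|\nabla u|^p$. The left-hand side is more delicate: from $\|u - \dashint_{B_j}u\|_{L^q(B_j)}$ one must pass to $\|u - \dashint_\Omega u\|_{L^q(\Omega)}$, which requires controlling the differences of the averages $\dashint_{B_j}u - \dashint_{B_k}u$ along chains of overlapping balls connecting $B_j$ to a fixed reference ball. This is the standard ``chaining argument'' for Sobolev--Poincar\'e inequalities on John domains (and a bounded $C^{0,1}$ domain is a John domain); the geometry enters precisely here, and this is the step I expect to be the main obstacle, since one needs the chain lengths and the weight powers to balance so that the geometric series converges. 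Alternatively, one can shortcut this by invoking a known weighted Poincar\'e--Sobolev inequality for the Muckenhoupt weight $d^{\delta p}$ — the power weight $d^{\delta p}$ lies in the appropriate $A_p$-related class when $\delta$ is in the relevant range, cf.~Kufner~\cite{Kuf1985} — and then the global statement is essentially a citation.

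Finally I would record the constant's dependence: it depends on $\Omega$ (through the John/Whitney constants and through $c_0,c_1$ of Lemma~\ref{lem:distance}), on $\delta$, and on $p$, but not on $u$, which is what \eqref{eq:gen-Sobolev} asserts. The rightmost equality $C\|d^\delta \nabla u\|_p = \|\nabla u\|_{p,d^{p\delta}}$ is just the definition of the weighted norm $\|\cdot\|_{p,w}$ with $w = d^{p\delta}$, so nothing is to be proved there. A remark worth inserting: when $\delta = 0$ the range $q \le \frac{3p}{3-p}$ recovers exactly \eqref{eq:Sobolev}, and for $0<\delta<1$ the exponent $\frac{3p}{3-p(1-\delta)}$ is strictly smaller, reflecting the loss caused by the degeneracy of the weight near $\partial\Omega$; implicitly one also needs $p(1-\delta)<3$ for the stated exponent to be finite and positive, which I would state as a standing hypothesis.
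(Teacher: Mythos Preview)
The paper does not prove this lemma at all: immediately after the statement it simply writes ``For a proof see Hurri-Syrj\"{a}nen~\cite{Hur1994}'' and moves on. So there is no argument in the paper to compare against.

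That said, your outline is the standard route to such weighted Poincar\'e--Sobolev inequalities and is, in essence, what one finds in references of this type: Whitney decomposition of $\Omega$ so that $d(\bfx)\sim r_j$ on each piece, the local Sobolev--Poincar\'e estimate with the scaling $1+3/q-3/p=\delta$ dictating the critical exponent $q=\frac{3p}{3-p(1-\delta)}$, and a chaining argument exploiting that a bounded Lipschitz domain is a John domain. Your identification of the chaining step as the delicate point is accurate; that is exactly where the John condition is used in~\cite{Hur1994}. One small quibble: in your first reduction you replace $u$ by $u-\dashint_\Omega u$, but note that for $u\in W^{1,p}(\Omega,d^{\delta p})$ one should first check that $u\in L^1(\Omega)$ so the average makes sense---this follows from the embedding~\eqref{eq:embedding-L1} under the standing hypothesis $\delta p<p-1$, which you may want to record alongside the hypothesis $p(1-\delta)<3$ you already flagged.
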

For a proof see Hurri-Syrj\"{a}nen~\cite{Hur1994}. Note that this inequality is formulated
removing constants by means of subtracting averages and that the exponent $q$ equals to
$p^*$ if $\delta=0$.  This will be used later on to make a proper sense of the quadratic
term in the Navier--Stokes equations, cf.~ Definitions~\ref{eq:BL-steady}
and~\ref{def:weak3}.

In addition to~\eqref{eq:embedding-L1} and the Hardy inequality, the critical role of the
exponent $p-1$ is also reflected in results about general weights and their relation with
the maximal function.
\begin{definition}
  We say that $w\in L^{1}_{loc}(\R^{3})$, which is $w\geq0$ a.e., belongs to the Muckenhoupt
  class $A_{p}$, for $1<p<\infty$, if there exists $C$ such that
  \begin{equation*}
    \sup_{Q\subset \R^{n}}\Bigg(\dashint_{Q}w(\bfx)\,\mathrm{d}\bfx\Bigg)\Bigg(\dashint_{Q}
    w(\bfx)^{1/(1-p)}\,\mathrm{d}\bfx\Bigg)^{p-1}\leq C,
  \end{equation*}
where $Q$ denotes a cube in $\R^{3}$.
\end{definition}
The role of the power $\alpha$ will be crucial in the sequel and we recall the following
result, which allows us to embed the results within a classical framework and also to use
fundamental tools of harmonic analysis. The powers of the distance function belong to the
class $A_{p}$ according to the following well-known result (For a proof see for instance
Dur\'an, Sammartino, and Toschi~\cite[Thm.~3.1]{DST2008})
\begin{lemma}
  The function $w(\bfx)=\big(d(\bfx)\big)^{\alpha}$ is a Muckenhoupt weight of class
  $A_{p}$ if and only if $-1<\alpha<p-1$.
\end{lemma}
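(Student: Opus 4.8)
The plan is to reduce the global $A_p$ condition on $\R^3$ to the behaviour, near a hyperplane, of the model weight $|y_3|^\alpha$, for which everything collapses to a one-dimensional integral. I would use two standard, quantitatively robust properties of the class $A_p$, both immediate from the definition: (i) $A_p$ membership is insensitive to replacing a weight by a comparable one — if $c^{-1}w_1\le w_2\le c\,w_1$ almost everywhere, then $w_1$ and $w_2$ lie in $A_p$ simultaneously; and (ii) $A_p$ is preserved, with a quantitatively controlled constant, under bi-Lipschitz changes of variables, since these distort Lebesgue measure and the geometry of cubes only by bounded factors.

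First I would localise. Since $\partial\Omega$ is compact, cover it by finitely many of the boundary charts introduced above. In a chart $\Omega_P$ the map $\Phi_P\colon(x',x_3)\mapsto\big(x',x_3-a_P(x')\big)$ has Jacobian identically $1$ and, because $|\nabla a_P|<r_P<1$, is bi-Lipschitz with constants depending only on $r_P$; it flattens $\partial\Omega\cap\Omega_P$ onto $\{y_3=0\}$ and, by Lemma~\ref{lem:distance}, transports $d(\bfx)^\alpha$ to a weight comparable to $|y_3|^\alpha$. By (i)--(ii), for cubes contained in a single chart the $A_p$ quotient of $d^\alpha$ is controlled by that of $W(\bfy):=|y_3|^\alpha$. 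Away from $\partial\Omega$ the weight $d^\alpha$ is non-degenerate and behaves at worst like the power $|\bfx|^\alpha$ at infinity — which lies in $A_p(\R^3)$ since $-3<\alpha<3(p-1)$ holds in our range — so no constraint beyond $-1<\alpha<p-1$ originates there; the content of the lemma is concentrated at the boundary, i.e.\ in the model weight $W$.

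For $W=|y_3|^\alpha$ I would evaluate the $A_p$ quotient over a cube $Q$ of side $\ell$ whose centre lies at height $h\ge0$ above $\{y_3=0\}$. By Fubini the two averages factor through the one-dimensional integrals $\int_I|t|^\alpha\dt$ and $\int_I|t|^{-\alpha/(p-1)}\dt$, where $I\subset[-(\ell+h),\,\ell+h]$ is an interval of length $\ell$; these are finite precisely when $\alpha>-1$ and $\alpha<p-1$ respectively, and in that range
\begin{equation*}
  \dashint_Q|y_3|^\alpha\dy\ \sim\ (\ell+h)^{\alpha},\qquad
  \dashint_Q|y_3|^{-\alpha/(p-1)}\dy\ \sim\ (\ell+h)^{-\alpha/(p-1)},
\end{equation*}
with implicit constants depending only on $\alpha$ and $p$. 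Multiplying, the powers of $\ell+h$ cancel exactly, $(\ell+h)^{\alpha}\big((\ell+h)^{-\alpha/(p-1)}\big)^{p-1}=1$, so the quotient is bounded uniformly in $\ell$ and $h$; undoing the reduction, $d^\alpha\in A_p$ whenever $-1<\alpha<p-1$. For the converse I would test on cubes $Q_\ell$ centred on $\partial\Omega$ with $\ell\to0$: if $\alpha\le-1$ then already $\int_{Q_\ell}d^\alpha=+\infty$ (the slab $\{d<c\ell\}\cap Q_\ell$ has measure $\gtrsim t\ell^2$ at depth $t$ and $\int_0^{c\ell}t^\alpha\dt$ diverges), so $d^\alpha$ is not even locally integrable; and if $\alpha\ge p-1$ the same computation applied to $d^{-\alpha/(p-1)}=w^{1/(1-p)}$ makes the second average infinite. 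Either way the supremum in the definition is $+\infty$.

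The elementary one-dimensional estimate is not where the difficulty sits: the genuine technical burden is making the localisation uniform over \emph{all} cubes of $\R^3$ at once — especially cubes much larger than the chart size, or straddling several charts — so that the resulting $A_p$ constant is finite. This is carried out by the usual doubling and covering bookkeeping for power weights attached to a compact Lipschitz hypersurface, combining the boundary model estimate above with the strictly weaker constraint $-3<\alpha<3(p-1)$ governing $|\bfx|^\alpha$ at infinity; a detailed argument along exactly these lines is given in Dur\'an, Sammartino and Toschi~\cite{DST2008}.
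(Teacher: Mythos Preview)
The paper does not prove this lemma at all; it simply refers the reader to Dur\'an, Sammartino and Toschi~\cite[Thm.~3.1]{DST2008}. Your proposal supplies a correct sketch of the standard argument underlying that reference --- localise in boundary charts, flatten by the bi-Lipschitz map $(x',x_3)\mapsto(x',x_3-a_P(x'))$ to reduce to the model weight $|y_3|^\alpha$, compute the one-dimensional $A_p$ quotient explicitly (where the scaling $(\ell+h)^\alpha\cdot\big((\ell+h)^{-\alpha/(p-1)}\big)^{p-1}=1$ gives the uniform bound and the divergence at $\alpha\le-1$ or $\alpha\ge p-1$ gives the converse), and defer the bookkeeping for cubes larger than the chart size to the same citation~\cite{DST2008}. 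So there is no genuine difference in approach to compare: you have simply made explicit what the paper leaves to the reference, and your outline is sound.
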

The main result which we will use about singular integrals, which follows from the
pioneering work of Muckenhoupt on maximal functions, is the following.
\begin{lemma}
\label{lem:Muckenhoupt}
  Let $CZ:\,C^{\infty}_{0}(\R^{n})\to C^{\infty}_{0}(\R^{n})$ be a standard
  Calder\'on-Zygmund singular integral operator. Let $w\in A_{p}$, for $1<p<\infty$. Then,
  the operator 
  $CZ$ is continuous from $L^{p}(\Omega,w)$ into itself.
\end{lemma}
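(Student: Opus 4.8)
The plan is to reduce the weighted boundedness on the bounded domain $\Omega$ to the classical weighted estimate on all of $\R^n$, and then run the Coifman--Fefferman good-$\lambda$ argument. Since $\Omega$ is bounded and $w\in A_p$ we have $w,\,w^{1/(1-p)}\in L^1_{loc}(\R^n)$, hence $L^p(\Omega,w)\hookrightarrow L^1(\Omega)$; extending $f\in L^p(\Omega,w)$ by zero we may regard it as a compactly supported integrable function on $\R^n$, so $CZf$ is well defined and it suffices to prove the a priori bound $\|CZf\|_{L^p(\R^n,w)}\le C\|f\|_{L^p(\R^n,w)}$ and restrict it to $\Omega$. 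The argument uses two classical facts about Muckenhoupt classes: (i) Muckenhoupt's maximal theorem, i.e. the Hardy--Littlewood maximal operator $M$ is bounded on $L^p(\R^n,w)$ exactly when $w\in A_p$; and (ii) the reverse-H\"older (self-improving) property of $A_p$ weights, which in particular yields $w\in A_\infty$: there are $C,\varepsilon>0$ such that $|E|\le\theta|Q|$ implies $w(E)\le C\theta^{\varepsilon}w(Q)$ for every cube $Q$ and every measurable $E\subset Q$.

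The core step is the good-$\lambda$ inequality: there exist $\gamma_0,C>0$, depending only on $n$, the Calder\'on--Zygmund constants of $CZ$ and the $A_\infty$ constant of $w$, such that for all $0<\gamma<\gamma_0$ and $\lambda>0$
\begin{equation*}
  w\big(\{|CZf|>2\lambda,\ Mf\le\gamma\lambda\}\big)\le C\,\gamma^{\varepsilon}\,
  w\big(\{M(CZf)>\lambda\}\big).
\end{equation*}
I would prove this by localization: take a Whitney decomposition of the open set $\{M(CZf)>\lambda\}$ into essentially disjoint dyadic cubes $Q_j$, discard those missing $\{Mf\le\gamma\lambda\}$, and on each remaining $Q_j$ write $f=f\chi_{3Q_j}+f\chi_{(3Q_j)^c}$. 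Using the Whitney property that a fixed dilate of $Q_j$ contains a point where $M(CZf)\le\lambda$ together with the H\"ormander smoothness of the kernel, $CZ(f\chi_{(3Q_j)^c})$ is controlled on $Q_j$ by a constant multiple of $\inf_{Q_j}Mf\le\gamma\lambda$; the local part is handled by the weak-type $(1,1)$ estimate for $CZ$, giving $|\{x\in Q_j:|CZ(f\chi_{3Q_j})(x)|>\lambda/2\}|\le C\gamma|Q_j|$. Property (ii) upgrades this to a bound $C\gamma^{\varepsilon}w(Q_j)$ for the $w$-measure, and summing over $j$ gives the inequality.

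Finally I would combine $\{|CZf|>2\lambda\}\subset\{|CZf|>2\lambda,\ Mf\le\gamma\lambda\}\cup\{Mf>\gamma\lambda\}$ with the good-$\lambda$ inequality and the layer-cake formula: multiplying by $p\lambda^{p-1}$, integrating over $\lambda\in(0,\infty)$, and using fact (i) for $M(CZf)$ to absorb the resulting $\|CZf\|_{L^p(\R^n,w)}$ term on the left (legitimate once this norm is known finite a priori — first run the estimate with $\min(|CZf|,N)$ and let $N\to\infty$, density of $C^\infty_0$ in $L^p(\Omega,w)$ being available since $w\in L^1_{loc}$), one obtains $\|CZf\|_{L^p(\R^n,w)}\le C\|Mf\|_{L^p(\R^n,w)}$; applying fact (i) to $f$ bounds the right-hand side by $C\|f\|_{L^p(\R^n,w)}$, and restriction to $\Omega$ finishes the proof. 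The hard part is entirely carried by the two classical inputs (i) and (ii); everything else is the standard Calder\'on--Zygmund machinery (weak-$(1,1)$ bound via the $L^2$-theory and the CZ decomposition, Whitney covering). Since (i) and (ii) are available in the references already cited for the preceding lemmas, the natural route is to quote them and perform the good-$\lambda$ computation above; an equivalent alternative is the Fefferman--Stein sharp maximal function, via $M^{\#}(CZf)\lesssim M_rf$ for some $r>1$ and boundedness of $M_r$ on $L^p(w)$ for $w\in A_{p/r}$ (valid for $r-1$ small by the openness of $A_p$), but it needs the same two ingredients.
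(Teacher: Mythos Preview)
The paper does not prove this lemma: it is stated without proof as a classical result ``which follows from the pioneering work of Muckenhoupt on maximal functions''. Your proposal therefore goes well beyond the paper by sketching the full Coifman--Fefferman argument.

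The strategy is the standard one and is essentially correct, but there is a slip in the good-$\lambda$ step. You claim that $CZ(f\chi_{(3Q_j)^c})$ is pointwise controlled on $Q_j$ by a constant multiple of $\inf_{Q_j}Mf\le\gamma\lambda$. This is too strong, and the ingredients you cite do not yield it: the H\"ormander condition bounds only the \emph{oscillation} of the far part over $Q_j$ (indeed by $C\gamma\lambda$), while $M(CZf)(y_j)\le\lambda$ controls \emph{averages} of $|CZf|$ near $Q_j$, not any pointwise value that could serve as an anchor for the far part. What one actually needs is that the far part is at most, say, $\tfrac{3}{2}\lambda$ on $Q_j$, so that $|CZf|>2\lambda$ forces $|CZf_1|>\tfrac{1}{2}\lambda$; obtaining even this weaker bound requires a pointwise anchor. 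The classical fix is to run the good-$\lambda$ for the maximal truncated operator $T^{\ast}f=\sup_{\varepsilon>0}|T_\varepsilon f|$ and the open set $\{T^{\ast}f>\lambda\}$: then $T^{\ast}f(y_j)\le\lambda$ is a genuine pointwise bound, and the kernel regularity lets one compare $T_\varepsilon f_2(x)$ with $T_{\varepsilon'}f(y_j)$ up to errors of size $C\,Mf(x_j)$. Alternatively, the Fefferman--Stein route you mention at the end (via $M^{\sharp}(CZf)\lesssim M_r f$ and the weighted sharp-function inequality for $A_\infty$ weights) avoids the issue entirely and is the cleaner path here. With either adjustment the proof goes through.
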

We will use this result mainly on the operators related to the solution of the Poisson
equation, to reconstruct a vector field from its divergence and its curl.
\subsection{Solenoidal spaces}
As usual in fluid mechanics, when working with incompressible fluids, it is natural to
incorporate the divergence-free constraint directly in the function spaces. These spaces
are built upon completing the space of solenoidal smooth functions with compact support,
denoted as $\bfphi\in C^{\infty}_{0,\sigma}(\Omega)$, in an appropriate topology. For
$\alpha>0$ define
\begin{equation*}
  \begin{aligned}
   & L^{p}_{\sigma}(\Omega,d^{\alpha}):=\overline{\left\{\bfphi\in
        C^{\infty}_{0,\sigma}(\Omega)\right\}}^{\|\,.\,\|_{p,d^{\alpha}}},
    \\
    &W^{1,p}_{0,\sigma}(\Omega,d^{\alpha}):=\overline{\left\{\bfphi\in
        C^{\infty}_{0,\sigma}(\Omega)\right\}}^{\|\,.\,\|_{1,p,d^{\alpha}}}\,.
  \end{aligned}
\end{equation*}
For $\alpha=0$ they reduce to the classical spaces $L^{p}_{\sigma}(\Omega)$ and
$W^{1,p}_{0,\sigma}(\Omega)$.  Next, we will extensively use the following extension of
inequality~\eqref{eq:div-curl}.
\begin{lemma}
  \label{lem:positivity}
  Let $1<p<\infty$ and assume that the weight $w$ belongs to the class $A_p$. Then
  there exists\footnote{The space $W^{1,p}_0(\Omega,w)$ can be
  replaced by other function spaces, where $C^{\infty}_{0}(\Omega)$ functions are dense.} a constant $C(\Omega,w)$ such that
\begin{equation*}
  \|\nabla \bfu\|_{p,w}\leq C(w,\Omega)(  \|\Div \bfu\|_{p,w}+
  \|\curl \bfu\|_{p,w})\qquad \forall\,\bfu\in W^{1,p}_0(\Omega,w).
\end{equation*}

\end{lemma}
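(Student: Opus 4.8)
The plan is to reduce the weighted estimate to the classical Bourguignon--Brezis inequality~\eqref{eq:div-curl} by means of a Calder\'on--Zygmund argument, exactly as announced just before the statement. First I would recall the representation of a vector field in terms of its divergence and curl: for $\bfu\in C^\infty_0(\Omega)$, extend by zero to all of $\R^3$ (the extension is still smooth and compactly supported, and both $\Div\bfu$ and $\curl\bfu$ extend by zero as well), and write $\bfu = -\nabla N * (\Div\bfu) + \nabla N * \times(\curl\bfu)$ up to the usual identities, where $N$ is the Newtonian potential in $\R^3$; more precisely, using $-\Delta\bfu = \curl\curl\bfu - \nabla\Div\bfu$, one has $\bfu = (-\Delta)^{-1}(\curl\curl\bfu - \nabla\Div\bfu)$ with the inverse Laplacian realized as convolution with $N$. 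Differentiating once, every component of $\nabla\bfu$ is a finite linear combination of second Riesz-transform-type operators (i.e. $\partial_i\partial_j(-\Delta)^{-1}$) applied to the components of $\Div\bfu$ and $\curl\bfu$. These are standard Calder\'on--Zygmund singular integral operators on $\R^3$.

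Next I would invoke Lemma~\ref{lem:Muckenhoupt}: since $w\in A_p$, each such operator $CZ$ is bounded on $L^p(\R^3,w)$, hence
\begin{equation*}
  \|\nabla\bfu\|_{p,w} \leq C(w)\big(\|\Div\bfu\|_{p,w} + \|\curl\bfu\|_{p,w}\big)
\end{equation*}
for $\bfu\in C^\infty_0(\Omega)$, with the norms taken over $\R^3$; but since $\bfu$, $\Div\bfu$, $\curl\bfu$ all vanish outside $\Omega$, these are the same as the norms over $\Omega$. This establishes the inequality on the dense subspace $C^\infty_0(\Omega)$.

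Finally I would pass to the closure. Given $\bfu\in W^{1,p}_0(\Omega,w)$, choose $\bfphi_k\in C^\infty_0(\Omega)$ with $\bfphi_k\to\bfu$ in $W^{1,p}(\Omega,w)$. Then $\nabla\bfphi_k\to\nabla\bfu$, $\Div\bfphi_k\to\Div\bfu$ and $\curl\bfphi_k\to\curl\bfu$ in $L^p(\Omega,w)$, and the inequality for each $\bfphi_k$ passes to the limit, giving the claim with the same constant $C(w,\Omega)$. The remark in the footnote is then immediate: the only place density of $C^\infty_0(\Omega)$ was used is in this last step, so any space in which such functions are dense works verbatim.

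The main obstacle is really a matter of bookkeeping rather than depth: one must be careful that extending $\bfu\in C^\infty_0(\Omega)$ by zero genuinely commutes with $\Div$ and $\curl$ (true precisely because $\bfu$ has compact support inside $\Omega$, so no boundary terms appear) and that the kernels $\partial_i\partial_j N$ satisfy the standard size and cancellation conditions so that Lemma~\ref{lem:Muckenhoupt} applies — this is classical. A secondary point worth stating explicitly is that the constant depends on $w$ only through its $A_p$-characteristic, which is what makes the estimate useful when later applied to $w=d^\alpha$ with $-1<\alpha<p-1$.
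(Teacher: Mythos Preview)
Your proposal is correct and follows essentially the same route as the paper: the identity $-\Delta\bfu=\curl\curl\bfu-\nabla\Div\bfu$ together with the Newtonian potential expresses $\nabla\bfu$ as Calder\'on--Zygmund operators acting on $\Div\bfu$ and $\curl\bfu$, and then Lemma~\ref{lem:Muckenhoupt} gives the weighted bound. Your write-up is slightly more explicit than the paper's about the extension by zero and the density passage to $W^{1,p}_0(\Omega,w)$, but there is no substantive difference.
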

\begin{proof}
  For vector fields $\bfu$ with compact support in $\Omega$ we have the well-known
  identity
\begin{equation*}
  \curl\curl\bfu=-\Delta\bfu+\nabla\Div\bfu.
\end{equation*}
 By use of the Newtonian potential this implies
  \begin{equation*}
   \bfu(\bfx)=-\frac{1}{4\pi}\nabla_{\bfx}\int_{\Omega}\frac{\divergence_{\bfy}\,
     \bfu(\bfy)}{|\bfx-\bfy|}\,\mathrm{d}\bfy+\frac{1}{4\pi}\curl_{\bfx}\int_\Omega\frac{\curl_{\bfy}
      \bfu(\bfy)}{|\bfx-\bfy|}\,\mathrm{d}\bfy.
  \end{equation*}
  Hence, we obtain
  \begin{equation*}
    \begin{aligned}
      \nabla
      \bfu(\bfx)&=-\frac{1}{4\pi}\nabla_{\bfx}\,\nabla_{\bfx}\int_{\Omega}
      \frac{\divergence_{\bfy}\,\bfu(\bfy)}{|\bfx-\bfy|}\,\mathrm{d}\bfy 
      +\frac{1}{4\pi}\nabla_{\bfx}\,\curl_{\bfx}\int_\Omega\frac{\curl_{\bfy} \,
        \bfu(\bfy)}{|\bfx-\bfy|}\,\mathrm{d}\bfy,
      \\
      &=-\frac{1}{4\pi}\int_{\Omega} \nabla_{\bfx}\,\nabla_{\bfx}
      \frac{\Div_{\bfy}\,\bfu(\bfy)} {|\bfx-\bfy|}
      \,\mathrm{d}\bfy+\frac{1}{4\pi}\int_\Omega\nabla_{\bfx}\,\curl_{\bfx}\frac{\curl_{\bfy}\,
        \bfu(\bfy)}{|\bfx-\bfy|}\,\mathrm{d}\bfy,
      \\
      &=CZ_{1}[\divergence \,\bfu](\bfx)+CZ_{2}[\curl \bfu](\bfx),
    \end{aligned}
  \end{equation*}
  where both terms $CZ_i$ from the right-hand side are Calderon-Zygmund type singular
  integrals. Applying the Muckenhoupt result from Lemma~\ref{lem:Muckenhoupt} the claim follows.
\end{proof}
In particular, we will use the latter result in the following special form
\begin{corollary}
  For $-1<\alpha<p-1$ there exists a constant $C=C(\Omega,\alpha,p)$ such that
\begin{equation}
  \label{eq:grad-curl-weighted}
  \int_\Omega d^\alpha|\nabla\bfv|^p\,\mathrm{d}\bfx\leq C  \int_\Omega
  d^\alpha|\curl\bfv|^p\,\mathrm{d}\bfx\qquad\forall\, \bfv\in W^{1,p}_{0,\sigma}(\Omega,d^\alpha). 
\end{equation}
\end{corollary}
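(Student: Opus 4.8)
The plan is to read off \eqref{eq:grad-curl-weighted} directly from Lemma~\ref{lem:positivity}, using the solenoidal constraint to annihilate the divergence term on the right-hand side. First I would observe that the hypothesis $-1<\alpha<p-1$ is exactly the range in which, by the characterization of power-type weights recalled above, the weight $w(\bfx)=d(\bfx)^\alpha$ lies in the Muckenhoupt class $A_p$; hence Lemma~\ref{lem:positivity} applies with this choice of $w$.

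Next I would record the two elementary facts that render the statement meaningful. Since $C^\infty_{0,\sigma}(\Omega)\subset C^\infty_0(\Omega)$, passing to closures in the $\|\,.\,\|_{1,p,d^\alpha}$-norm gives $W^{1,p}_{0,\sigma}(\Omega,d^\alpha)\subset W^{1,p}_0(\Omega,d^\alpha)$, so any $\bfv$ in the solenoidal space is admissible in Lemma~\ref{lem:positivity}. Moreover, for such $\bfv$ one has $\Div\bfv=0$: choosing $\bfphi_n\in C^\infty_{0,\sigma}(\Omega)$ with $\bfphi_n\to\bfv$ in $W^{1,p}(\Omega,d^\alpha)$, one has $\Div\bfphi_n\to\Div\bfv$ in $L^p(\Omega,d^\alpha)$, and since for power weights $L^p(\Omega,d^\alpha)\subset L^p_{loc}(\Omega)\subset L^1_{loc}(\Omega)\subset\mathcal{D}'(\Omega)$ while each $\Div\bfphi_n$ vanishes identically, the limit satisfies $\Div\bfv=0$.

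With these preliminaries the conclusion is immediate. Applying Lemma~\ref{lem:positivity} with $w=d^\alpha$ and using $\Div\bfv=0$ yields
\[
\|\nabla\bfv\|_{p,d^\alpha}\leq C(w,\Omega)\big(\|\Div\bfv\|_{p,d^\alpha}+\|\curl\bfv\|_{p,d^\alpha}\big)=C(w,\Omega)\,\|\curl\bfv\|_{p,d^\alpha},
\]
and raising both sides to the $p$-th power gives \eqref{eq:grad-curl-weighted} with $C=C(\Omega,\alpha,p):=C(w,\Omega)^p$.

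I do not foresee any genuine difficulty: the analytic substance has already been spent in Lemma~\ref{lem:positivity}, and ultimately in the weighted Calder\'on--Zygmund bound of Lemma~\ref{lem:Muckenhoupt} combined with the $A_p$-membership of $d^\alpha$. The single point needing a word of justification is the transfer of the divergence-free condition to the abstract completion $W^{1,p}_{0,\sigma}(\Omega,d^\alpha)$, which is handled, as above, by the local integrability $L^p(\Omega,d^\alpha)\subset L^1_{loc}(\Omega)$ that holds for every real power of the distance function.
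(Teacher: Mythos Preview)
Your proposal is correct and follows exactly the approach implicit in the paper: the Corollary is stated there without separate proof, as an immediate specialization of Lemma~\ref{lem:positivity} with $w=d^\alpha\in A_p$ (for $-1<\alpha<p-1$) and the divergence term dropped on the solenoidal space. Your additional justification that $\Div\bfv=0$ persists in the completion $W^{1,p}_{0,\sigma}(\Omega,d^\alpha)$ is a welcome detail the paper leaves tacit.
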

A basic tool in mathematical fluid mechanics is the construction of a continuous right
inverse of the divergence operator with zero Dirichlet conditions. An explicit
construction is due to the \Bogovskii{} and it is reviewed in
Galdi~\cite[Ch.~3]{Gal2011}. The following results holds true
\begin{theorem}
\label{thm:bog}
  Let $\omega\subset\R^3$ be a bounded smooth domain and let $f\in L^{p}_0(\omega)$
  there exists at least one $\bfu=\Bog_\omega(f)\in W^{1,p}_0(\omega)$ which solves the
  boundary value problem
  \begin{equation*}
    \left\{  
      \begin{aligned}
        \Div \bfu&=f\qquad&\text{in }\omega,
        \\
        \bfu&=\mathbf{0}\qquad&\text{on }\partial\omega.
      \end{aligned}
    \right.
  \end{equation*}
  Among other spaces, the operator $Bog_\omega$ is linear and continuous from
  $L^p(\omega)$ to $W^{1,p}_0(\omega)$, for all $p\in (1,\infty)$
\end{theorem}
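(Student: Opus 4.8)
The plan is to prove the Bogovski\u{\i} theorem by the classical Bogovski\u{\i} integral-operator construction, following Galdi's presentation, since the statement merely records a standard tool. First I would reduce to the case of a domain $\omega$ that is star-shaped with respect to a ball: any bounded smooth domain can be written as a finite union of such star-shaped Lipschitz subdomains, and a partition-of-unity argument (together with the fact that the corrector terms produced when one localises $f$ have vanishing mean on each piece) patches the local solutions into a global one. So the heart of the matter is the star-shaped case.

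For a domain $\omega$ star-shaped with respect to a ball $B$, fix $\theta\in C_0^\infty(B)$ with $\int_\omega\theta=1$ and define, for $f\in L^p_0(\omega)$,
\begin{equation*}
  \bfu(\bfx):=\int_\omega f(\bfy)\,\frac{\bfx-\bfy}{|\bfx-\bfy|^3}\int_{|\bfx-\bfy|}^\infty \theta\!\left(\bfy+r\,\frac{\bfx-\bfy}{|\bfx-\bfy|}\right) r^{2}\,\mathrm{d}r\,\mathrm{d}\bfy .
\end{equation*}
A change of variables rewrites this as $\bfu(\bfx)=\int_\omega \bfN(\bfx,\bfx-\bfy)f(\bfy)\,\mathrm{d}\bfy$ with a kernel $\bfN(\bfx,\bfz)$ that is homogeneous of degree $-(n-1)$ in $\bfz$ and smooth away from $\bfz=0$; one checks directly from this formula that $\Div\bfu=f$ in $\omega$ (using $\int_\omega f=0$ to kill the boundary contribution) and that $\bfu$ vanishes near $\partial\omega$ because $\theta$ is supported in $B\Subset\omega$. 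Differentiating under the integral sign, $\nabla\bfu$ splits into a weakly singular (integrable-kernel) part, which is bounded on $L^p$ by Young's inequality, plus a genuine Calder\'on--Zygmund singular-integral part with a principal-value kernel of degree $-n$; the $L^p(1<p<\infty)$ boundedness of the latter is the classical Calder\'on--Zygmund theorem (indeed this is exactly the kind of operator covered by Lemma~\ref{lem:Muckenhoupt} with $w\equiv1$). Combining these bounds with the \Poincare{} inequality~\eqref{eq:Poincare} gives $\|\bfu\|_{1,p}\le C\|f\|_p$, and linearity of $f\mapsto\bfu$ is immediate from the formula.

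The main obstacle is not the singular-integral estimate itself but the bookkeeping in the localisation step: when one multiplies $f$ by the members $\{\eta_i\}$ of a partition of unity subordinate to a cover of $\omega$ by star-shaped pieces $\omega_i$, the products $\eta_i f$ need not have zero mean over $\omega_i$, so one must subtract correctors of the form $c_i\,\chi_{\omega_i}$ and transfer the deficits through a chain of overlapping subdomains — this requires the connectedness of $\omega$ and a careful inductive choice of the constants so that each piece fed into the local Bogovski\u{\i} operator integrates to zero. Once the construction is in place for the $p$ at hand the weighted refinements used elsewhere in the paper follow from the same kernel representation combined with Lemma~\ref{lem:Muckenhoupt}, but for the unweighted statement recorded here it suffices to cite the classical result; accordingly I would keep the proof short and refer to Galdi~\cite[Ch.~3]{Gal2011} for the details of the patching argument.
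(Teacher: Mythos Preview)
Your sketch is correct and in fact goes well beyond what the paper does: the paper gives \emph{no} proof of Theorem~\ref{thm:bog} at all, merely recording it as a classical tool and referring to Galdi~\cite[Ch.~3]{Gal2011} for the explicit Bogovski\u{\i} construction. Your outline (star-shaped reduction, explicit integral formula, kernel splitting into weakly singular plus Calder\'on--Zygmund parts, partition-of-unity patching with mean-zero correctors) is exactly the standard argument found there, so your proposal is consistent with --- and more detailed than --- the paper's treatment.
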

\subsection{Solenoidal Lipschitz truncation}
\label{sec:sol}

We recall that the nonlinear operator defined as follows
$\mathcal{A}_{p}$ 
\begin{equation*}
\mathcal{A}_{p}\bfw=  -\Div\big(|\bfD\bfw|^{p-2}\bfD\bfw\big),
\end{equation*}
is strongly monotone in $W^{1,p}_{0,\sigma}(\Omega)$, for $1<p<\infty$. In fact
\begin{equation*}
  (|\bfD\bfw_1|^{p-2}\bfD\bfw_1-|\bfD\bfw_2|^{p-2}\bfD\bfw_2)
  :(\bfD\bfw_1-\bfD\bfw_2)
  \geq0,
\end{equation*}
with equality if and only if $\bfD\bfw_1=\bfD\bfw_2$. A crucial point in the classical
Minty-Browder argument relies on analyzing, for $\bfv_n,\bfv\in W^{1,p}(\Omega)$ the
non-negative quantity
\begin{equation*}
    \int_\Omega(|\bfD\bfv_{n}|^{p-2}\bfD\bfv_{n}-|\bfD\bfv|^{p-2}\bfD\bfv):(\bfD\bfv_{n}-
    \bfD\bfv)\,\mathrm{d}\bfx\geq0.
\end{equation*}
Here $\bfv_{n}$ is a Galerkin approximation and $\bfv$ its weak $W^{1,p}_{0,\sigma}$-limit.
Using the weak formulation for both $\bfv_{n}$ and its limit $\bfv$ one can show (using the
monotonicity argument) that
\begin{equation*}
  \mathcal{A}_p(\bfv_{n})\to\mathcal{A}_p(\bfv)\quad\text{at least in}\quad
  (C^\infty_{0,\sigma}(\Omega))'. 
\end{equation*}
Two main points of the classical argument are 1) being allowed to use $\bfv_{n}$ as test function and 2)
showing that
\begin{equation*}
  \int_\Omega(\nabla\bfv_{n})\,\bfv_{n}\cdot\bfv_{n}\,\mathrm{d}\bfx\to
  \int_\Omega(\nabla\bfv)\,\bfv\cdot\bfv\,\mathrm{d}\bfx.
\end{equation*}
In general item 1) trivially follows for all $1<p<\infty$, due to the continuity of the
operator $\mathcal{A}_{p}$. We will see that this point is not satisfied with the
degenerate operators we handle in Section~\ref{sec:Classical-BL}-\ref{sec:Generalised-BL}
and appropriate localization/regularization/truncation must to be introduced, see
below. Hence, we are using here some known technical tools in a new and non-standard
context: the use of local techniques is not motivated by the presence of the convective
term, but by the character of the nonlinear stress-tensor. Probably our analysis can be
extended also to other degenerate fractional operator as those studied by Abdellaoui,
Attar, and Bentifour~\cite{AAB2019}.

Note also that it is for the request 2) that a limitation on the exponent arises, since
$\bfv_{n}\to\bfv$ in $L^q$ for $q<p^*=\frac{3p}{3-p}$ and this enforces a lower bound on
the allowed values of $p$. In the analysis of non-Newtonian fluid this classical
monotonicity argument is not applicable when $p\leq\frac{9}{5}$ (in the steady case). To
overcome this problem and to solve the system also for smaller values of $p$ (up to
$\frac{6}{5}$) one needs test functions which are Lipschitz continuous, hence one needs to
properly truncate $\bfv^m-\bfv$. This is the point where the Lipschitz truncation,
originally developed by Acerbi and Fusco~\cite{AcFu1,AcFu2} in the context of quasi-convex
variational problems, comes into play. In fluid mechanics this tool has been firstly used
in~\cite{DMS,FMS}, for a review we refer to~\cite{Bre2017,Ruz2013}. Being strongly
nonlinear and also non-local, the Lipschitz truncation destroys the solenoidal character of
a given function. Consequently, the pressure functions has to be introduced.  Another
approach is that of constructing a divergence-free version of the Lipschitz truncation -
extending a solenoidal Sobolev function by a solenoidal Lipschitz function. This approach
has been developed in~\cite{BDF2012,BDS2013} and it completely avoids the appearance of
the pressure function and highly simplifies the proofs avoiding results obtained in
Simader and Sohr~\cite{SS1992} (as done in Diening, R{\r u}{\v{z}}i{\v{c}}ka, and
Wolf~\cite{DRW2010}) to associate a pressure to the weak solution.  We report the
following version which can be found in~\cite[Thm. 4.2]{BDS2013}.
\begin{theorem}
  \label{thm:remlip}
  Let $1< s< \infty$ and $B\subset\R^3$ a ball. Let $(\bfu^m) \subset W^{1,s}_{0,\sigma}(B)$
  be a weak $W^{1,s}_{0,\sigma}(B)$ null sequence extended by zero to $\R^3$. Then there exist
  $j_0 \in \mathbb{N}$ and a double sequence $(\lambda^{m,j}) \subset\R$ with
  $2^{2^j} \leq \lambda^{m,j}\leq 2^{2^{j+1}-1}$ a sequence of functions $(\bfu^{m,j})$
  and open sets%
\footnote{The set $\mathcal{O}^{m,j}$ is explicitly given by $\mathcal{O}^{m,j} := \set{\mathcal M(\nabla^2 (\curl^{-1} \bfu^m))>
      \lambda^{m,j}}$, where $\mathcal M$ is the Hardy-Littlewood maximal operator and
    $\curl^{-1}=\curl\Delta^{-1}$.}
$(\mathcal{O}^{m,j})$ with the
  following properties for $j \geq j_0$.
  \begin{enumerate}[leftmargin=1cm,itemsep=1ex,label={\rm (\alph{*})}]
  \item \label{itm:remlip1} 
$\bfu^{m,j}\in
    W^{1,\infty}_{0,\sigma}(2B)$ and $\bfu^{m,j}=\bfu^m$ on $\Rdr
    \setminus \mathcal{O}^{m,j}$ for all $m \in \setN$;
  \item \label{itm:remlip2} 
$\|\nabla\bfu^{m,j}\|_\infty\leq
    c\lambda^{m,j}$ for all $m \in \setN$;
  \item  \label{itm:remlip3} 
$\bfu^{m,j} \to 0$ for $m \to
    \infty$ in $L^\infty(\Omega)$;
  \item \label{itm:remlip4}
 $\nabla\bfu^{m,j} \weakastto 0$ for $m
    \to \infty$ in $L^\infty(\Omega)$;
  \item \label{itm:remlip5} 
For all $m,j \in \N$ it holds
    $\norm{\lambda^{m,j} \chi_{\mathcal{O}^{m,j}}}_s
    \leq c(s)\, 2^{-\frac{j}{s}}\norm{\nabla \bfu^m}_s$.
  \end{enumerate}
\end{theorem}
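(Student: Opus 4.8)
\emph{Sketch of the proof.} The statement is quoted verbatim from~\cite[Thm.~4.2]{BDS2013}; let me indicate the strategy one follows. The plan is to pass to a \emph{vector potential}. Since every $\bfu^m$ is solenoidal and, after extension by zero, compactly supported in $B$, one sets $\bfw^m:=\curl^{-1}\bfu^m=\curl\Delta^{-1}\bfu^m$; using the identity $\curl\curl=-\Delta+\nabla\Div$ and $\Div\bfu^m=0$ one checks that $\curl\bfw^m=\bfu^m$ (up to an irrelevant sign) and that $\nabla^2\bfw^m$ is a Calder\'on--Zygmund transform of $\nabla\bfu^m$, whence $\|\nabla^2\bfw^m\|_s\le c\,\|\nabla\bfu^m\|_s$ and $(\bfw^m)$ is a weak null sequence in $W^{2,s}$. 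The truncation is then performed at the level of the \emph{potentials} — a second order (i.e.\ $C^{1,1}$) Lipschitz truncation — after which one simply takes the curl: the outcome is divergence free by construction and its gradient is controlled by the second gradient of the truncated potential.

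Concretely, for a level $\lambda>0$ I would set $\mathcal{O}^m_\lambda:=\{\mathcal M(\nabla^2\bfw^m)>\lambda\}$, an open set, and recall that on $\Rdr\setminus\mathcal{O}^m_\lambda$ the field $\nabla\bfw^m$ agrees a.e.\ with a map that is Lipschitz with constant $\le c\lambda$ (the classical pointwise bound through the maximal operator). A Whitney decomposition of $\mathcal{O}^m_\lambda$ with a subordinate partition of unity then allows one to extend $\bfw^m$ across $\mathcal{O}^m_\lambda$ by gluing first order Taylor polynomials of $\bfw^m$ taken at nearby good points; the result $\bfw^m_\lambda\in C^{1,1}(\Rdr)$ satisfies $\|\nabla^2\bfw^m_\lambda\|_\infty\le c\lambda$ and $\bfw^m_\lambda=\bfw^m$, $\nabla\bfw^m_\lambda=\nabla\bfw^m$ on $\Rdr\setminus\mathcal{O}^m_\lambda$. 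Putting $\bfu^m_\lambda:=\curl\bfw^m_\lambda$ gives $\Div\bfu^m_\lambda=0$, $\|\nabla\bfu^m_\lambda\|_\infty\le c\lambda$ and $\bfu^m_\lambda=\bfu^m$ on $\Rdr\setminus\mathcal{O}^m_\lambda$; multiplying the potential by a fixed cut-off that equals $1$ on $B$ localizes the support to $2B$, the commutator term being supported in the bad set and harmless there. Once a level is fixed this already yields \ref{itm:remlip1} and \ref{itm:remlip2}.

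The selection of the levels is a pigeonhole argument inside the dyadic block $[2^{2^j},2^{2^{j+1}-1}]$. For fixed $m,j$ I would consider the $\sim 2^j$ dyadic numbers $\lambda_k=2^{2^j+k}$ lying in this interval and, with $g:=\mathcal M(\nabla^2\bfw^m)$, estimate by a layer-cake/telescoping computation that $\sum_k\lambda_k^{\,s}\,\bigl|\{g>\lambda_k\}\bigr|\le c_s\,\|g\|_s^s\le c_s'\,\|\nabla\bfu^m\|_s^s$; hence some index $k$ satisfies $\lambda_k^{\,s}\,|\{g>\lambda_k\}|\le c\,2^{-j}\,\|\nabla\bfu^m\|_s^s$. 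Choosing $\lambda^{m,j}:=\lambda_k$ for such a $k$ and then $\mathcal{O}^{m,j}:=\mathcal{O}^m_{\lambda^{m,j}}$, $\bfu^{m,j}:=\bfu^m_{\lambda^{m,j}}$, delivers \ref{itm:remlip5}; the prescribed range of $\lambda^{m,j}$ and its growth in $j$ are then automatic.

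It remains to argue the stability properties \ref{itm:remlip3}--\ref{itm:remlip4}. Here I would use that for a \emph{fixed} $j$ all the numbers $\lambda^{m,j}$ lie in the compact interval $[2^{2^j},2^{2^{j+1}-1}]$, so by \ref{itm:remlip1}--\ref{itm:remlip2} the sequence $(\bfu^{m,j})_m$ is bounded in $W^{1,\infty}_{0,\sigma}(2B)\hookrightarrow C^{0,1}(\overline{2B})$; by Rellich's theorem $\bfu^m\to0$ strongly in $L^s(B)$, and $\nabla\bfw^m\to0$ in $L^s_{loc}(\Rdr)$ as well. Feeding this into the explicit Whitney-extension formula for $\bfu^{m,j}$ on $\mathcal{O}^{m,j}$ — which represents $\bfu^{m,j}$ there through local averages of $\bfu^m$ and of $\nabla\bfw^m$ — identifies $0$ as the only weak-$*$ limit point of $(\bfu^{m,j})_m$ in $L^\infty$, and Arzel\`a--Ascoli upgrades this to uniform convergence, i.e.~\ref{itm:remlip3}; then \ref{itm:remlip4} follows at once, since $\nabla\bfu^{m,j}$ is bounded in $L^\infty$ and $\nabla\bfu^{m,j}\weakastto0$ already holds in the sense of distributions. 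The genuinely delicate parts of this program are the construction and the fine analysis of the solenoidal $C^{1,1}$-truncation of the potentials — in particular checking that the glued Taylor polynomials assemble into a \emph{bona fide} $C^{1,1}$ vector field enjoying all of the listed bounds — together with the sharp pigeonhole estimate behind \ref{itm:remlip5}; the complete details are in~\cite[Thm.~4.2]{BDS2013}.
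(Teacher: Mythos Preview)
Your sketch is correct and in fact goes well beyond what the paper itself does: the paper does not prove this theorem at all but merely quotes it from~\cite[Thm.~4.2]{BDS2013}. Your outline of the vector-potential/$C^{1,1}$-truncation strategy, the pigeonhole selection of levels, and the compactness argument for \ref{itm:remlip3}--\ref{itm:remlip4} accurately reflects the construction in that reference.
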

As usual we denote by $\chi_A$ the indicator function of the measurable set
$A\subset\R^3$.
\section{Existence of weak solutions for the  Baldwin-Lomax model in the steady case}
\label{sec:Classical-BL}
In this section we consider the model for the average of turbulent fluctuations attributed
to Baldwin and Lomax~\eqref{eq:Balwdin-Lomax}. By using a standard notation we denote the
curl of $\bfv$ by $\bfomega$
\begin{equation*}
  \bfomega=\curl\bfv=\nabla\times\bfv.
\end{equation*}
Since we consider the equations in a rotational setting, we write the convective term
as follows 
\begin{equation*}
      (\nabla\bfv)\bfv=\bfomega\times\bfv+\frac{1}{2}\nabla|\bfv|^{2}.
\end{equation*}
By redefining the pressure we can consider the following steady system for a turbulent
flow at statistical equilibrium
\begin{equation}
  \left\{\begin{aligned} 
      -\nu_{0}\,\Div\bfD\bfv+\nl+\curl\big(d^2|\bfomega|\bfomega\big)
      +\nabla \pi&=\bff& \mbox{in        $\Omega$,}
      \\
      \Div \bfv&=0\quad& \mbox{in $\Omega$,}
      \\
      \bfv&=\mathbf{0}\quad\quad& \mbox{ \,on $\partial\Omega$,}
\end{aligned}\right.\label{eq:BLM}
\end{equation}
in the case $\nu_{0}>0$.  We have the following result, which does not follow by the
standard theory of monotone operator.
\begin{theorem}
  \label{thm:easy}
  Let be given $\nu_{0}>0$ and $\bff\in W^{-1,2}(\Omega)=(W^{1,2}_0(\Omega))'$. Then,
there exists $\bfv\in W^{1,2}_{0,\sigma}(\Omega)$, with $\bfomega \in L^3(\Omega,d^2)\cap
L^3_{loc}(\Omega)$, which is a weak solution to~\eqref{eq:BLM}, that is such that
\begin{equation*}
  \int_{\Omega}\nu_{0}\,\bfD\bfv:\bfD\bfphi+d^{2}|\bfomega|\bfomega\cdot\curl\bfphi
  +(\nl)\cdot\bfphi\,\mathrm{d}\bfx  
  =\langle\bff,\bfphi\rangle\qquad\forall  
  \,\bfphi\in C^\infty_{0,\sigma}(\Omega).   
\end{equation*}
\end{theorem}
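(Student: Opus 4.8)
\medskip
\noindent\emph{Proof strategy.} The plan is a Galerkin approximation followed by a monotonicity (Minty--Browder) argument; the genuinely new point, forced by the degeneracy of the stress tensor at $\partial\Omega$, will be the admissibility of the limit velocity as a test function.

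\smallskip
\noindent\emph{Galerkin scheme and a priori bounds.} Since $\partial\Omega$ is smooth, I would fix a basis $(\bfw_k)_{k\in\N}\subset C^{\infty}_{0,\sigma}(\Omega)$ (e.g.\ the eigenfunctions of the Stokes operator) whose span is dense in $W^{1,2}_{0,\sigma}(\Omega)$, and look for $\bfv^m=\sum_{k\le m}c^m_k\bfw_k$, $\bfomega^m:=\curl\bfv^m$, solving
\begin{equation*}
  \int_\Omega\nu_0\,\bfD\bfv^m:\bfD\bfw_k+d^2|\bfomega^m|\bfomega^m\cdot\curl\bfw_k+(\bfomega^m\times\bfv^m)\cdot\bfw_k\dx=\langle\bff,\bfw_k\rangle,\qquad k=1,\dots,m.
\end{equation*}
Since the $\bfw_k$ are smooth, the left-hand side depends continuously on $(c^m_1,\dots,c^m_m)\in\R^m$; testing with $\bfv^m$, using the pointwise identity $(\bfomega^m\times\bfv^m)\cdot\bfv^m=0$, the Korn inequality~\eqref{eq:Korn} and Young's inequality, one obtains $\tfrac{\nu_0}{2}\|\bfD\bfv^m\|_2^2+\int_\Omega d^2|\bfomega^m|^3\dx\le C\|\bff\|_{-1,2}^2$. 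Hence $\bfv^m$ exists by Brouwer's theorem, and, uniformly in $m$, $(\bfv^m)$ is bounded in $W^{1,2}_{0,\sigma}(\Omega)$, $(\bfomega^m)$ in $L^3(\Omega,d^2)\subset L^3_{loc}(\Omega)$, and $(|\bfomega^m|\bfomega^m)$ in $L^{3/2}(\Omega,d^2)$.

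\smallskip
\noindent\emph{Compactness and the limiting identity.} Along a subsequence: $\bfv^m\rightharpoonup\bfv$ in $W^{1,2}_{0,\sigma}(\Omega)$ and strongly in $L^q(\Omega)$ for every $q<6$; $\bfomega^m\rightharpoonup\bfomega:=\curl\bfv$ in $L^2(\Omega)$ and in $L^3(\Omega,d^2)$, so that $\bfomega\in L^3(\Omega,d^2)\cap L^3_{loc}(\Omega)$; and $|\bfomega^m|\bfomega^m\rightharpoonup\bfsigma$ in $L^{3/2}(\Omega,d^2)$. In the $k$-th equation the linear terms pass to the limit by weak convergence; I would rewrite the convective term as $\int_\Omega\bfomega^m\cdot(\bfv^m\times\bfw_k)\dx$ and use that $\bfw_k\in L^\infty$ makes $\bfv^m\times\bfw_k\to\bfv\times\bfw_k$ strongly in $L^2$; and the stress term converges because $\curl\bfw_k\in L^3(\Omega,d^2)$. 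By density of $\mathrm{span}(\bfw_k)$ this yields
\begin{equation*}
  \int_\Omega\nu_0\,\bfD\bfv:\bfD\bfphi+d^2\bfsigma\cdot\curl\bfphi+(\bfomega\times\bfv)\cdot\bfphi\dx=\langle\bff,\bfphi\rangle\qquad\forall\,\bfphi\in C^{\infty}_{0,\sigma}(\Omega),
\end{equation*}
and it remains to prove $\bfsigma=|\bfomega|\bfomega$.

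\smallskip
\noindent\emph{Identification of $\bfsigma$ --- the main obstacle.} The map $\bfa\mapsto|\bfa|\bfa$ is strictly monotone, with $(|\bfa|\bfa-|\bfb|\bfb)\cdot(\bfa-\bfb)\gtrsim|\bfa-\bfb|^{3}$. I would therefore examine $I^m:=\int_\Omega d^2\big(|\bfomega^m|\bfomega^m-|\bfomega|\bfomega\big)\cdot(\bfomega^m-\bfomega)\dx\ge0$. Testing the $m$-th Galerkin identity with $\bfv^m$ gives $\int_\Omega d^2|\bfomega^m|^3\dx=\langle\bff,\bfv^m\rangle-\nu_0\|\bfD\bfv^m\|_2^2$; expanding $I^m$, letting $m\to\infty$ using the weak convergences of the cross terms and the weak lower semicontinuity of $\bfv\mapsto\|\bfD\bfv\|_2^2$, one reduces $\limsup_m I^m\le0$ to the energy identity $\nu_0\|\bfD\bfv\|_2^2+\int_\Omega d^2\bfsigma\cdot\bfomega\dx=\langle\bff,\bfv\rangle$, that is, to being allowed to take $\bfphi=\bfv$ in the limiting identity. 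This is precisely where the degeneracy bites: $C^{\infty}_{0,\sigma}(\Omega)$ is dense in $W^{1,2}_{0,\sigma}(\Omega)$ but not in the curl-seminorm $\|\curl(\cdot)\|_{L^3(\Omega,d^2)}$, so $\bfv$ is not \emph{a priori} an admissible test function. I would establish its admissibility by a boundary cut-off argument: for $\eta_\delta$ equal to $1$ on $\{d>2\delta\}$, supported in $\{d>\delta\}$, with $|\nabla\eta_\delta|\lesssim\delta^{-1}$, the functions $\eta_\delta\bfv$ (made solenoidal by a \Bogovskii{}-type correction localized in the boundary shell, cf.\ Theorem~\ref{thm:bog}) and then mollified converge to $\bfv$ in $W^{1,2}(\Omega)$ \emph{and} in the curl-seminorm, because the two cut-off error terms are dominated by $\int_{\{d<2\delta\}}|\bfv/d|^2$ and by $\int_{\{d<2\delta\}}|\bfv/d^{1/3}|^3$, which tend to $0$ --- the first by the classical Hardy inequality (as $\bfv\in W^{1,2}_0(\Omega)$), the second by~\eqref{eq:characterization-fractional} applied with $s=\tfrac13$, $p=3$, which is legitimate since $\bfv\in W^{1,2}_0(\Omega)\hookrightarrow W^{1/3,3}_0(\Omega)$, hence $\bfv/d^{1/3}\in L^3(\Omega)$. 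Granting the energy identity, $\limsup_m I^m\le0$, so $I^m\to0$; as $d>0$ a.e., a subsequence of $(\bfomega^m)$ converges to $\bfomega$ a.e., whence $|\bfomega^m|\bfomega^m\to|\bfomega|\bfomega$ a.e., and together with boundedness in the reflexive space $L^{3/2}(\Omega,d^2)$ this forces $\bfsigma=|\bfomega|\bfomega$. Inserting this into the limiting identity gives exactly the asserted weak formulation. Apart from the weighted density/Hardy step, which I expect to be the only substantive difficulty, the argument is a routine Galerkin--monotonicity scheme; it is the hypothesis $\nu_0>0$ that makes it work, both by providing the $W^{1,2}_{0,\sigma}$-bound and by rendering $\bfomega\times\bfv\in L^{3/2}(\Omega)$ meaningful through $W^{1,2}_0\hookrightarrow L^6$.
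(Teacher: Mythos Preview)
Your route differs from the paper's. The paper does not run a direct Galerkin scheme on~\eqref{eq:BLM}; it first adds a regularising term $-\tfrac{1}{m}\Div(|\bfD\bfv^m|\bfD\bfv^m)$, obtains $\bfv^m\in W^{1,3}_{0,\sigma}(\Omega)$ for the regularised problem by the classical Minty trick, and then identifies the nonlinear stress in the limit $m\to\infty$ by a \emph{local} argument: one localises $\bfv^m-\bfv$ to an interior ball $B\Subset\Omega$ with a bump function plus a \Bogovskii{} correction on $2B$, shows $\bfomega^m\to\bfomega$ a.e.\ in $B$, and concludes via Vitali. Your global Minty argument based on $I^m$ is a genuinely different and shorter approach; what makes it available is precisely that with a basis in $C^\infty_{0,\sigma}(\Omega)$ the Galerkin approximants $\bfv^m$ are themselves admissible test functions everywhere. (A small slip: Stokes eigenfunctions are \emph{not} compactly supported, so drop that example and keep the basis genuinely in $C^\infty_{0,\sigma}$.)

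The step you flag as the ``main obstacle'' is both unnecessary and, as written, has a gap. It is unnecessary because you never have to test the limiting identity with $\bfv$ itself: insert $\bfphi=\bfv^m\in C^\infty_{0,\sigma}(\Omega)$ in the \emph{limit} equation and let $m\to\infty$. The a~priori bound gives $\bfomega^m\rightharpoonup\bfomega$ in $L^3(\Omega,d^2)$ and $\bfsigma\in L^{3/2}(\Omega,d^2)$, so $\int_\Omega d^2\bfsigma\cdot\bfomega^m\to\int_\Omega d^2\bfsigma\cdot\bfomega$; together with $\bfv^m\to\bfv$ in $L^3(\Omega)$ for the convective term this yields the energy identity $\langle\bff,\bfv\rangle=\nu_0\|\bfD\bfv\|_2^2+\int_\Omega d^2\bfsigma\cdot\bfomega$ directly, and your computation of $\limsup_m I^m\le 0$ then goes through verbatim (both cross terms in the expansion of $I^m$ are well-defined and converge by weighted H\"older duality between $L^{3/2}(\Omega,d^2)$ and $L^3(\Omega,d^2)$). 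The proposed resolution via boundary cut-off is problematic because the \Bogovskii{} constant on a shell $\{\delta<d<2\delta\}$ is of order $\delta^{-1}$ (mass has to be transported along the shell through cross-sections of area $\sim\delta$), which destroys the weighted $L^3$ smallness of $\curl\bfb_\delta$; and if instead you apply $\Bog_\Omega$ or $\Bog_{\Omega_\delta}$, the correction is no longer supported in the shell and you lose the factor $d^2\sim\delta^2$ that made $\nabla\eta_\delta\times\bfv$ small.
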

Here $\langle\cdot,\cdot\rangle$ denotes generically a duality pairing.  By density it is
enough to consider test functions $\bfphi\in W^{1,2}_{0,\sigma}(\Omega)$, with
$\curl\bfphi \in L^3_{loc}(\Omega)$.  The proof of Theorem~\ref{thm:easy} is based on a
Galerkin approximation and monotonicity arguments (beyond the classical Minty-Browder
trick) to pass to the limit

We observe that the term coming from Baldwin-Lomax approach is monotone too. We prove for
a general $p\in(1,\infty)$ and a general non-negative weight the following inequality.
\begin{lemma}
  For smooth enough $\bfomega_{i}$ (it is actually enough that $d^\frac{\alpha}{p}\bfomega_i\in
    L^{p}(\Omega)$, with $1<p<\infty$) and for $\alpha\in\R^+$  it holds that
    \begin{equation*}
      \int_\Omega
      (d^{\alpha}|\bfomega_1|^{p-2}\bfomega_1-d^{\alpha}|\bfomega_2|^{p-2}\bfomega_2)
\cdot(\bfomega_1-\bfomega_2)\,\mathrm{d}\bfx\geq0,
\end{equation*}
for any (not necessarily the distance) bounded function such that  $d:\Omega\to\R^+$ for
a.e. $\bfx\in\Omega$. 
\end{lemma}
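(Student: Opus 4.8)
The statement is the pointwise monotonicity inequality for the weighted $p$-Laplacian-type nonlinearity, integrated against $\bfomega_1-\bfomega_2$. The essential observation is that the weight $d^{\alpha}(\bfx)$ is non-negative and does not depend on the vector argument, so it can be pulled out of the relevant inner product pointwise: for a.e.\ $\bfx$ we have
\begin{equation*}
  \bigl(d^{\alpha}|\bfomega_1|^{p-2}\bfomega_1-d^{\alpha}|\bfomega_2|^{p-2}\bfomega_2\bigr)\cdot(\bfomega_1-\bfomega_2)
  = d^{\alpha}\,\bigl(|\bfomega_1|^{p-2}\bfomega_1-|\bfomega_2|^{p-2}\bfomega_2\bigr)\cdot(\bfomega_1-\bfomega_2).
\end{equation*}
Since $d^{\alpha}\geq 0$, it therefore suffices to prove the unweighted pointwise inequality $\bigl(|\bfa|^{p-2}\bfa-|\bfb|^{p-2}\bfb\bigr)\cdot(\bfa-\bfb)\geq 0$ for all $\bfa,\bfb\in\R^3$ and all $1<p<\infty$, and then integrate.

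\textbf{Key steps.} First I would record that the map $\bfa\mapsto|\bfa|^{p-2}\bfa$ is the gradient of the convex function $\bfa\mapsto\frac{1}{p}|\bfa|^{p}$ on $\R^3$ (convexity of $t\mapsto\frac1p t^{p}$ on $[0,\infty)$ composed with the norm), so monotonicity of the gradient of a convex function gives the inequality immediately; alternatively one expands the product directly as
\begin{equation*}
  |\bfa|^{p}+|\bfb|^{p}-|\bfa|^{p-2}\bfa\cdot\bfb-|\bfb|^{p-2}\bfb\cdot\bfa
  \geq |\bfa|^{p}+|\bfb|^{p}-|\bfa|^{p-1}|\bfb|-|\bfb|^{p-1}|\bfa|
  = \bigl(|\bfa|^{p-1}-|\bfb|^{p-1}\bigr)\bigl(|\bfa|-|\bfb|\bigr)\geq 0,
\end{equation*}
using Cauchy--Schwarz in the middle step and the fact that $t\mapsto t^{p-1}$ is non-decreasing on $[0,\infty)$ in the last. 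Second, I would multiply through by $d^{\alpha}(\bfx)\geq0$, which preserves the inequality, and integrate over $\Omega$; the integrand is measurable and non-negative, so the integral is well defined in $[0,+\infty]$ with no integrability subtlety needed for the lower bound. Third, to see the integral is in fact finite (so the statement is not vacuous), I would note that $d^{\alpha/p}\bfomega_i\in L^{p}(\Omega)$ means $d^{\alpha}|\bfomega_i|^{p}\in L^{1}(\Omega)$, and every term in the expanded integrand is, by Young's inequality applied to $d^{\alpha}|\bfomega_i|^{p-1}|\bfomega_j|=\bigl(d^{\alpha/p'}|\bfomega_i|^{p-1}\bigr)\bigl(d^{\alpha/p}|\bfomega_j|\bigr)$, controlled by $d^{\alpha}|\bfomega_1|^{p}+d^{\alpha}|\bfomega_2|^{p}\in L^1(\Omega)$.

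\textbf{Main obstacle.} There is essentially no analytic obstacle here: the only thing to be careful about is the role of the weight, namely that $d$ enters as a \emph{scalar} multiplicative coefficient independent of $\bfomega$ (which is exactly why the lemma holds for \emph{any} bounded non-negative $d:\Omega\to\R^+$, not just the distance function), and that the hypothesis $d^{\alpha/p}\bfomega_i\in L^p(\Omega)$ is precisely what is needed to make all terms integrable. The minor point worth stating explicitly is the equality case of Cauchy--Schwarz versus the equality case of the full inequality: equality forces $|\bfomega_1|=|\bfomega_2|$ and collinearity, hence $\bfomega_1=\bfomega_2$ wherever $d>0$ — but this refinement is not required for the stated conclusion, which is just non-negativity, so I would mention it only in passing.
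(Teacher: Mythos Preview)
Your proof is correct and follows essentially the same route as the paper: both reduce to the classical pointwise monotonicity of $\bfxi\mapsto|\bfxi|^{p-2}\bfxi$ by treating the non-negative weight as harmless. The paper phrases the reduction slightly differently, absorbing the weight into the vectors via the substitution $\widetilde{\bfomega}_i=d^{\alpha/p}\bfomega_i$ (which also makes the hypothesis $d^{\alpha/p}\bfomega_i\in L^p(\Omega)$ transparent) rather than factoring $d^{\alpha}$ out, but the two computations are algebraically identical.
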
    
\begin{proof}
  We have
  \begin{equation*}
    \label{eq:lowerbound}
    \begin{aligned}
      &\int_\Omega
      (d^{\alpha}|\bfomega_1|^{p-2}\bfomega_1-d^{\alpha}|\bfomega_2|^{p-2}\bfomega_2)\cdot(\bfomega_1-\bfomega_2)\,\mathrm{d}\bfx
      \\
      &= \int_\Omega
      (|d^{{\frac{\alpha}{p}}}\bfomega_1|^{p-2}d^{{\frac{\alpha}{p}}}\bfomega_1
      -|d^{{\frac{\alpha}{p}}}\bfomega_2|^{p-2}d^{{\frac{\alpha}{p}}}\bfomega_2):(d^{{\frac{\alpha}{p}}}\bfomega_1
      -d^{{\frac{\alpha}{p}}}\bfomega_2)\,\mathrm{d}\bfx,
    \end{aligned}
  \end{equation*}
  where the last inequality derives from the same monotonicity/convexity argument used
  classically for the operator $\mathcal{A}_{p}$.
\end{proof}
\begin{proof}[Proof of Theorem~\ref{thm:easy}]
  The proof is based on the construction of an approximate sequence $(\bfv^{m})\subset
  W^{1,3}_{0,\sigma}(\Omega)$ which solves the following regularized problem (written with
  the weak formulation)
\begin{equation}
 \label{eq:regularized1802}
 \begin{aligned}
    \int_{\Omega}\frac{1}{m}|\bfD\bfv^{m}|\bfD\bfv^{m}:&\bfD\bfphi +
    \nu_{0}\,\bfD\bfv^{m}:\bfD\bfphi+d^{2}|\bfomega^{m}|\bfomega^{m}\cdot\curl\bfphi
    \\
    &+(\nlm)\cdot\bfphi\,\mathrm{d}\bfx 
    =\langle\bff,\bfphi\rangle,\qquad\forall\,\bfphi\in W^{1,3}_{0,\sigma}(\Omega),
  \end{aligned}
\end{equation}
The regularization is a technical step necessary to have a continuous problem,
approximating~\eqref{eq:BLM} and for which the difference $\bfv^{m}-\bfv$ can be localized
to produce a legitimate test function (This is not easy to be done at the finite
dimensional level).

The construction of the solution $\bfv^{m}$ goes through a Galerkin approximation
$\bfv^{m}_{n}\in V_{n}$,
\begin{equation*}
  \begin{aligned}
    \frac{1}{m}\int_{\Omega}|\bfD\bfv^{m}_{n}|\bfD\bfv^{m}_{n}:\bfD\bfphi_{j} +
    \nu_{0}\,\bfD\bfv^{m}_{n}:\bfD\bfphi_{j}+d^{2}|\bfomega^{m}_{n}|\bfomega^{m}_{n}\cdot\curl\bfphi_{j}
    \\
    +(\nlmn)\cdot\bfphi_{j}\,\mathrm{d}\bfx =\langle\bff,\bfphi_{j}\rangle\qquad \text{for }j=1,\dots,n,
  \end{aligned}
\end{equation*}
where $V_{n}=\text{Span}\{\bfphi_1,\dots,\bfphi_n\}$ and
$\bfomega^{m}_{n}=\curl\bfv^{m}_{n}$.  The functions $(\bfphi_i)_{i}$ are a Galerkin basis made by
smooth and solenoidal functions. 

Using $\bfv^{m}_{n}$ as test function gives the estimate
\begin{equation*}
\int_{\Omega}  \frac{1}{m}|\bfD\bfv^{m}_{n}|^{3}+
  \frac{\nu_{0}}{2}|\bfD\bfv^{m}_{n}|^{2}+
  d^{2}|\bfomega^{m}_{n}|^{3}\,\mathrm{d}\bfx 
  \leq\frac{C_{K}^{2}}{2\nu_0}\|\bff\|_{-1,2}^2,
\end{equation*}
where $C_K$ is the constant in Korn's inequality~\eqref{eq:Korn}.
Hence, using Korn inequality, we have (up to a sub-sequence) that for fixed $m \in \N$
\begin{align}
  & \bfv^{m}_{n}\overset{n}{\rightharpoonup} \bfv^{m}\qquad\text{in
  }W^{1,3}_{0,\sigma}(\Omega),\label{eq:1802A}
  \\
  & \bfv^{m}_{n}\overset{n}{\rightharpoonup} \bfv\qquad\text{in }L^{q}(\Omega),\qquad
  \forall\,q<\infty,\label{eq:1802B}
\end{align}
This regularity is enough to apply the classical monotonicity argument
(cf.~\cite[p.~171,p.~216]{Lio1969}). In particular, from~\eqref{eq:1802A}-\eqref{eq:1802B}
it follows that 
\begin{equation*}
  \begin{aligned}
    & \int_{\Omega}
    (\nlmn)\cdot\bfv^{m}_{n}\,\mathrm{d}\bfx\overset{n}{\to}\int_{\Omega}(\nlm)\cdot\bfv^{m}
    \,\mathrm{d}\bfx,
  \end{aligned} 
\end{equation*}
Next, the function $\bfv^{m}\in W^{1,3}_{0,\sigma}(\Omega)$ is a weak solution in the
sense of~\eqref{eq:regularized1802}. This can be proved by observing that if we define the
following operator
\begin{equation*}
\mathcal{B}^{1/m}(\bfw):=-\frac{1}{m}\,\Div|\bfD\bfw|\bfD\bfw-
\nu_{0}\,\Div\bfD\bfw+\curl(d^{2}\,|\curl\bfw|\curl\bfw), 
\end{equation*}
 it holds that
\begin{equation*}
  0\leq  \int_\Omega
  \big(\mathcal{B}^{1/m}(\bfv^m_{n})-\mathcal{B}^{1/m}(\bfw)\big):(\bfv^m_{n}-\bfw)\,\mathrm{d}\bfx\qquad\forall
  \,\bfw\in   W^{1,3}_{0,\sigma}(\Omega),
\end{equation*}
(the later inequality holds not only formally, but rigorously, since integral is
well-defined). Moreover, being $\bfv^{m}_{n}$ a legitimate test function in the Galerkin
formulation, it is possible to pass to the limit (for fixed $m\in\N$) as $n\to\infty$,
showing that (exactly as in~\cite{Lio1969}, where  the tools for generalized Navier-Stokes
equations have been developed) 
\begin{equation*}
0\leq  \int_\Omega
\big(\mathcal{B}^{1/m}(\bfv^{m})-\mathcal{B}^{1/m}(\bfw)\big):(\bfv^{m}-\bfw)\,\mathrm{d}\bfx\qquad\forall\,
\bfw\in W^{1,3}_{0,\sigma}(\Omega).
\end{equation*}
Choosing $\bfw=\bfv^{m}-\lambda\bfphi$, with $\lambda>0$ and arbitrary $\bfphi\in
W^{1,3}_{0,\sigma}(\Omega)$, this is enough to infer that
$\lim_{n\to+\infty}\mathcal{B}^{1/m}(\bfv^{m}_{n})=\mathcal{B}^{1/m}(\bfv^{m})$.

\bigskip

To study the limit $m\to+\infty$ for the sequence $(\bfv^{m})$ a technique beyond the
classical monotonicity is needed.

First, taking $\bfv^{m}$ as test function in~\eqref{eq:regularized1802} we get 
\begin{equation}
  \label{eq:1902}
  \begin{aligned}
    \int_{\Omega}\frac{1}{m}|\bfD\bfv^{m}|^{3} +
    \frac{\nu_{0}}{2}\,|\bfD\bfv^{m}|^{2}+d^{2}|\bfomega^{m}|^{3}\,\mathrm{d}\bfx\leq
    \frac{C_{K}^{2}}{2\nu_{0}}\|\bff\|_{-1,2}^{2}.
  \end{aligned}
\end{equation}
%
%
%
%
%
Hence, using Korn inequality, we have (up to a sub-sequence)
\begin{align}
    &  \frac{1}{m} |\bfD\bfv^{m}|\bfD\bfv^{m}\rightharpoonup \bfzero\qquad\text{in
    }L^{3/2}(\Omega), \label{eq:1302A-1}
    \\
    &   \bfv^{m}\rightharpoonup \bfv\qquad\text{in }W^{1,2}_{0,\sigma}(\Omega),\label{eq:1302A}
    \\
    & \bfv^{m}\rightarrow \bfv\qquad\text{in }L^{q}(\Omega),\qquad \forall\,q<6,\label{eq:1302B}
    \\
    &d^{4/3}|\bfomega^{m}|\bfomega^{m}\rightharpoonup \chi\qquad\text{in
    }L^{3/2}(\Omega). \label{eq:1302C}
\end{align}
This implies in particular that 
\begin{equation*}
  \begin{aligned}
    & \int_{\Omega} (\nlm)\cdot\bfv^{m}\,\mathrm{d}\bfx\to\int_{\Omega}(\nl)\cdot\bfv \,\mathrm{d}\bfx,
    \\
    &\int_{\Omega}d^{2}|\bfomega^{m}|\bfomega^{m}\cdot\bfpsi\,\mathrm{d}\bfx=\int_{\Omega}d^{4/3}|\bfomega^{m}|\bfomega^{m}\cdot
    d^{2/3}\bfpsi\,\mathrm{d}\bfx 
    \\
    &\hspace{4cm} \to\int_{\Omega}\chi \cdot d^{2/3}\bfpsi\,\mathrm{d}\bfx=\int_{\Omega}d^{2/3}\chi \cdot \bfpsi\,\mathrm{d}\bfx,
  \end{aligned} 
\end{equation*}
for all $\bfpsi \in L^{3}(\Omega)$.
Passing to the limit in the weak formulation  we have
\begin{equation*}
    \int_{\Omega}\nu_{0}\,\bfD\bfv:\bfD\bfphi+d^{2/3}\chi\cdot\curl\bfphi+(\nl)\cdot\bfphi\,\mathrm{d}\bfx
  =\langle\bff,\bfphi\rangle\qquad\forall
  \,\bfphi\in C^{\infty}_{0,\sigma}(\Omega). 
\end{equation*}
If we formally rewrite now the inequality
\begin{equation*}
  0\leq\int_{\Omega}(d^2|\bfomega^m|\bfomega^m-d^2|\bfomega|\bfomega)
  \cdot(\bfomega^m-\bfomega)\,\mathrm{d}\bfx,
\end{equation*}
coming from the monotonicity and express the same quantity by means of the weak
formulation, we can observe that the classical monotonicity argument will work since the
convergence of the generally troubling term
\begin{equation*}
  \int_\Omega\nlm\cdot(\bfv^m-\bfv)\,\mathrm{d}\bfx\to0,
\end{equation*}
trivially follows from the uniform  bound $\|\bfv^m\|_{W^{1,2}}\leq C$.

\bigskip

The crucial point is now that the integral
\begin{equation*}
\int_\Omega  d^2|\bfomega|\bfomega
  \cdot(\bfomega^m-\bfomega)\,\mathrm{d}\bfx,
\end{equation*}
is not defined. In fact, for $\bfv\in W^{1,2}_{0,\sigma}(\Omega)$ we only have
$d^2|\bfomega|\bfomega\in L^1(\Omega)$ and also $\bfomega^m-\bfomega\in L^2(\Omega)$.
To overcome this problem we observe that for each compact set $K\Subset\Omega$
\begin{equation*}
  \Big(\min_{\bfx\in K} d(\bfx)^2\Big)\int_{K}|\bfomega^m|^3\,\mathrm{d}\bfx\leq\int_\Omega
  d^2|\bfomega^m|^3\,\mathrm{d}\bfx\leq \frac{C_{K}^{2}}{2\nu_{0}}\|\bff\|_{-1,2}^{2}, 
\end{equation*}
hence a completely local argument may work, being $\bfomega$ in $L^{3}(K)$. 
\begin{remark}
  Since the function $(d(\bfx))^2$ is not in the Muckenhoupt ${A}_3$ class, we cannot
  recover global bounds on the sequence $(\nabla\bfv^{m})$ from the a priori
  estimate~\eqref{eq:1902} and Lemma~\ref{lem:positivity}. This is a mathematical
  peculiarity of the Baldwin-Lomax stress tensor.
\end{remark}
To use a local argument we consider the family of compact sets
\begin{equation*}
  K_n:=\Big\{\bfx\in \Omega: \ d(\bfx)\geq\frac{1}{n}\Big\}
  \Subset \Omega,
\end{equation*}
which are nested and invading, that is $K_{n}\subset K_{n+1}$ and $\cup_{n\in\N}
K_n=\Omega$. Hence, by a diagonal argument, up to a further sub-sequence, we can write
that for each $K\Subset \Omega$
\begin{equation*}
  \bfomega^m\rightharpoonup\bfomega \qquad\text{in }L^3(K),
\end{equation*}
where $\bfomega=\curl \bfv$, by uniqueness of the weak limit.

Next, we fix an open ball $B\Subset\Omega$ such that $\overline{2B}\Subset \Omega$ and
localize with a bump function $\eta \in C^\infty_0(2B)$ such that
 \begin{equation}
   \label{eq:bump}
   \chi_B(\bfx) \leq \eta(\bfx) \leq \chi_{2B}(\bfx),
 \end{equation}
and $\abs{\nabla \eta} \leq
c\,R^{-1}$, where $R>0$ is the radius of~$B$. We define the following 
divergence-free function with support in $\overline{2B}$:
\begin{align*}
  \bfw^{m} := \eta \,(\bfv^m - \bfv) - \Bog_{2B}( \nabla \eta \cdot
  (\bfv^m - \bfv)),
\end{align*}
where $\Bog_{2B}$ is the \Bogovskii{} operator on $2B$, acting from $L^p_0(2B)$ to
$W^{1,p}_0(2B)$, cf.~Theorem~\ref{thm:bog}. Since $\nabla \eta \cdot (\bfv^m - \bfv)$ is bounded in $L^6_0(2B)$
by~\eqref{eq:1302A}, we have that $\bfw^m$ is bounded in
$W^{1,6}_{0,\sigma}(2B)$. Moreover, $\bfv^m \to \bfv$ in $L^3(\Omega)$ and the continuity
of the \Bogovskii{} operator $\Bog_{2B}$ implies 
  \begin{align}
    \label{conv:1m}
    \bfw^m&\rightarrow 0\quad \text{in}\quad L^3(2B),
    \\
    \bfw^m&\rightharpoonup 0\quad \text{in}\quad W^{1,3}(2B),\label{conv:2m}
    \\
    \Bog_{2B}( \nabla \eta \cdot (\bfv^m - \bfv))&\rightarrow 0\quad\text{in}\quad
    W^{1,3}_0(2B).\label{conv:3m}
  \end{align}
  The functions $\bfw^m\in W^{1,3}_0(2B)$ and their extensions by zero on
  $\Omega\backslash 2B$ (which still belong to $W^{1,3}_{0}(\Omega)$ and which we denote
  by a slight abuse of notation with the same symbol) are then legitimate test functions,
  since $|\bfomega^m|\bfomega^m$ and $|\bfomega|\bfomega$ both belong to
  $L^{3/2}_{loc}(\Omega)$.

  We then obtain from the weak formulation~\eqref{eq:regularized1802} of the regularized
  problem the following equality
  \begin{align*}
    \int_{\Omega}&\eta \big(d^2|\bfomega^m|\bfomega^m-d^2|\bfomega|\bfomega\big)\cdot
    \big(\bfomega^m-\bfomega\big)\,\mathrm{d}\bfx
    \\
    &= -\int_{\Omega} \big(d^2|\bfomega^m|\bfomega^m-d^2|\bfomega|\bfomega\big)\cdot
    \nabla\eta\times\big(\bfv^m-\bfv\big)\,\mathrm{d}\bfx
    \\
    &+\int_{\Omega} \big(d^2|\bfomega^m|\bfomega^m-d^2|\bfomega|\bfomega\big)\cdot
    \curl\Big[\Bog_{2B}( \nabla \eta \cdot (\bfv^m - \bfv))\Big]\,\mathrm{d}\bfx
    \\
    &- \nu_0\int_\Omega \bfD(\bfv^m-\bfv):\bfD\bfw^m\,\mathrm{d}\bfx+
    \int_\Omega\big(
    \nl-\nlm\big)\cdot\bfw^m\,\mathrm{d}\bfx
    \\
    &+\int_\Omega\big( d^{2/3}\chi-d^{2}|\bfomega|\bfomega\big)\cdot
    \curl\bfw^m\,\mathrm{d}\bfx-\frac{1}{m}\int_\Omega |\bfD\bfv^m|\bfD\bfv^m:\bfD\bfw^m\,\mathrm{d}\bfx
    \\
    &=:(I)+(II)+(III)+(IV)+(V)+(VI).
  \end{align*}
  Due to the strong $L^3$ convergence of $\bfv^m$ and~\eqref{conv:3m} we see that $(I)$
  and $(II)$ vanish as $m\to+\infty$ (We also used that the function $d$ is uniformly
  bounded).
  We write the following equality 
  \begin{align*}
    (III)&= -\nu_0\int_\Omega \eta|\bfD(\bfv^m-\bfv)|^2-\nu_0\int_\Omega
    \bfD(\bfv^m-\bfv):\nabla\eta\otimes(\bfv^m-\bfv)\,\mathrm{d}\bfx
    \\
    &+\nu_0\int_\Omega \bfD(\bfv^m-\bfv):\bfD\big[\Bog_{2B}(
    \nabla \eta \cdot (\bfv^m - \bfv))\big]\,\mathrm{d}\bfx,
  \end{align*}
  where the first term is non-positive and the second and third one vanish on account
  of~\eqref{eq:1302B} and~\eqref{conv:3m} respectively.  The convergence of $(IV)$ follows
  trivially from the uniform bounds in $W^{1,2}(\Omega)$ and~\eqref{conv:1m}.  The term
  $(V)\rightarrow0$ due to~\eqref{conv:3m} and the bound in $L^{3/2}(B)$ of $\chi$ and
  $|\bfomega|\bfomega$. Finally, $(VI)\to0$, due the $W^{1,3}(B)$ bound of $\bfv^m-\bfv$
  and~\eqref{eq:1302A-1} .

In conclusion, since $\eta\geq0$, the integrand is non-negative by
Lemma~\ref{lem:positivity}, and from  $\eta\equiv1$ on $B$, it follows 
\begin{equation*}
  \begin{aligned}
    0&\leq \int_{B} \big(d^2\,|\bfomega^m|\bfomega^m-d^2|\bfomega|\bfomega\big)\cdot
    \big(\bfomega^m-\bfomega\big)\,\mathrm{d}\bfx
    \\
    &\quad \leq \int_{\Omega}\eta
    \big(d^2\,|\bfomega^m|\bfomega^m-d^2|\bfomega|\bfomega\big)\cdot
    \big(\bfomega^m-\bfomega\big)\,\mathrm{d}\bfx.
  \end{aligned}
\end{equation*}
Consequently, we obtain
  \begin{align*}
    \lim_{m\rightarrow\infty}\int_{B}&
    \big(d^2\,|\bfomega^m|\bfomega^m-d^2|\bfomega|\bfomega\big)\cdot
    \big(\bfomega^m-\bfomega\big)\,\mathrm{d}\bfx=0,
  \end{align*}
and so,  
  \begin{align*}
    d^{2/3}\bfomega^m&\rightarrow d^{2/3}\bfomega\quad \text{a.e in }B.
  \end{align*}
Finally, we use $d(B,\partial\Omega)>R$ and that the distance $d(\bfx)$ is strictly positive for
each $\bfx\in \Omega$.  The arbitrariness
of $B$ implies
  \begin{align*}
    \bfomega^m&\rightarrow \bfomega\quad \text{a.e in }\Omega.
  \end{align*}
  Next, the limit function $\bfomega$ belongs to $L^2(\Omega)$ and it is finite almost
  everywhere. The hypotheses of Vitali's convergence theorem are satisfied since
\begin{equation*}
  \begin{aligned}
&     d^{4/3}|\bfomega^m|\bfomega^m\qquad \text{uniformly bounded in }L^{3/2}(\Omega),
    \\
  &   d^{4/3}|\bfomega^m|\bfomega^m\to     d^{4/3}|\bfomega|\bfomega\quad\text{a.e. in
  }\Omega  ,
  \\
&  \bfomega \text{ finite a.e.},
  \end{aligned}
\end{equation*}
ensuring that $d^{2/3}\chi=d^2|\bfomega|\bfomega$ such that the limit $\bfv$ is a weak
solution to~\eqref{eq:BLM}.
\end{proof}
\section{On generalised Baldwin-Lomax models}
\label{sec:Generalised-BL}
In the proof of the result from the previous section it was essential to have $\nu_0$
positive and fixed, to derive a uniform bound of $(\bfv^{m})_{m\in\N}$ in
$W^{1,2}_0(\Omega)$. This allows us to make sense of the boundary conditions, among the
other relevant properties.  On the other hand, in applications $\nu_{0}$ is generally an
extremely small number. The K41-Kolmogorov theory for turbulence is in fact valid in the
vanishing viscosity limit, and predicts (still in a statistical sense) a non zero
turbulent dissipation, see Frisch~\cite{Fri1995}.  To capture the properties which are
still valid in the limit $\nu_0=0$ we study now the following steady system
\begin{align*}
  \left\{\begin{aligned} 
      (\nabla\bfv)\bfv+\curl\big(d^\alpha(\kappa+|\bfomega|)^{p-2}\bfomega\big)
      +\nabla \pi&=\bff& \mbox{in        $\Omega$,}
      \\
      \Div \bfv&=0\quad& \mbox{in $\Omega$,}
      \\
      \bfv&=\mathbf{0}\quad\quad& \mbox{ \,on $\partial\Omega$.}
\end{aligned}\right.\label{eq:1.1}
\end{align*}
Here $\kappa\geq0$ and the most \textit{interesting} case is the following (degenerate)
one
\begin{equation*}
  \kappa=0,\qquad p=3,\qquad \alpha=1,
\end{equation*}
where the exponent $p=3$ is exactly that from the turbulence theory (as a generalization
of the classical Smagorinsky theory), while $\alpha=1$ is the same as suggested
in~\eqref{eq:Kelvin-Voigt} from the model introduced in~\cite{ABLN2019}.
Without loss of generality we also set $\ell_0=1$ and $\ell(\bfx)=d(\bfx)$, as in the
turbulent viscosity described in Remark~\ref{rem:alpha1}.
\begin{remark}
  The critical value (coming from the Muckenhoupt theory, cf.~Section~\ref{sec:weighted})
  for the power of the distance is $\alpha=p-1=3-1=2$. In this case certain bounds on the
  first derivatives of the velocity can be still inferred  from weighted estimates of the
  gradient, as in~\eqref{eq:grad-curl-weighted}.
\end{remark}

\bigskip 

\noindent We start our analysis focusing on the following boundary value problem still
written in rotational form
\begin{align}
\label{eq:BL-steady}
  \left\{\begin{aligned}
\nl
+\curl \big(d\,|\bfomega|\bfomega\big)+\nabla \pi&=\bff& \mbox{in
      $\Omega$,}
    \\
    \Div \bfv&=0\qquad& \mbox{in $\Omega$,}
    \\
    \bfv&=\mathbf{0}\,\qquad\quad& \mbox{ \,on $\partial\Omega$.}
\end{aligned}\right.
\end{align}
\begin{definition}
\label{def:weak-solution-alpha1-p3}
  We say that $\bfv\in W^{1,3}_{0,\sigma}(\Omega,d)$ is a weak solution
  to~\eqref{eq:BL-steady} if the following equality is satisfied
  \begin{equation*}
    \int_\Omega(\nl)
\cdot\bfphi+
    d\,|\bfomega|\bfomega\cdot\curl\bfphi\,\mathrm{d}\bfx=\int_\Omega\bff
    \cdot\bfphi\,\mathrm{d}\bfx\qquad\forall\,\bfphi\in C^{\infty}_{0,\sigma}(\Omega).
  \end{equation*}
\end{definition}

The main result we will prove in this section is the following.
\begin{theorem}
  \label{thm:existence}
  Let be given $\bff
=\Div \bfF$ with $\bfF\in L^{3/2}(\Omega,d^{-1/2})$ then there exists a weak solution $\bfv\in
  W^{1,3}_{0,\sigma}(\Omega,d)$ of the problem~\eqref{eq:BL-steady}.
In addition, the solution satisfies the energy-type equality
\begin{equation*}
  \int_\Omega    d\,|\bfomega|^3\,\mathrm{d}\bfx=-\int_\Omega\bfF
    \cdot\nabla\bfv \,\mathrm{d}\bfx.
\end{equation*}
\end{theorem}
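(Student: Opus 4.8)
The plan is to solve \eqref{eq:BL-steady} by a double approximation procedure analogous to, but simpler than, the one used for Theorem~\ref{thm:easy}, the essential difference being that there is now no $\nu_0|\bfD\bfv|^2$ term to control the full gradient: the only a priori bound will be the weighted one $\int_\Omega d\,|\bfomega|^3\,\mathrm{d}\bfx\leq C$. First I would regularize by adding $-\frac1m\Div\big(|\bfomega|^{p-2}\bfomega\big)$ — more precisely a term $\frac1m\curl(|\bfomega|\bfomega)$ together with, if needed, a small full-gradient regularization $\frac1m(-\Div\bfD\bfv)$ purely to give compactness at fixed $m$ — and construct a Galerkin solution $\bfv^m_n\in V_n$ in a basis of smooth solenoidal functions. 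Testing with $\bfv^m_n$ and writing $\bff=\Div\bfF$ gives, using H\"older with the splitting $\bfF\cdot\nabla\bfv^m_n = (d^{1/3}\nabla\bfv^m_n)\cdot(d^{-1/3}\bfF)$ and the weighted Korn/curl estimate \eqref{eq:grad-curl-weighted} (valid since the relevant power $\alpha=1$ lies in $(-1,p-1)=(-1,2)$), the uniform bound
\begin{equation*}
  \int_\Omega d\,|\bfomega^m_n|^3\,\mathrm{d}\bfx + \tfrac1m(\cdots) \leq C\|\bfF\|_{3/2,d^{-1/2}}^{3/2}\,\Big(\int_\Omega d\,|\bfomega^m_n|^3\,\mathrm{d}\bfx\Big)^{1/3},
\end{equation*}
hence a bound on $\bfv^m_n$ in $W^{1,3}_{0,\sigma}(\Omega,d)$ independent of $n,m$. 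By Lemma~\ref{lem:Sobolev} (with $\delta=1/3$, $p=3$, giving $q\le\frac{9}{3-2}=9$, in particular $q>2$) the sequence $\bfv^m$ is bounded in $L^q(\Omega)$ for some $q>2$, which is exactly what makes the convective term $(\nl)\cdot\bfphi$ well-defined and gives strong $L^2$-type compactness for it.

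\textbf{Passing to the limits.} At fixed $m$, the regularized operator is monotone and coercive in $W^{1,3}_{0,\sigma}(\Omega,d)$, so the limit $n\to\infty$ is the classical Minty--Browder argument (legitimacy of $\bfv^m_n$ as a test function is automatic at the Galerkin level, and convergence of $\int(\nlmn)\cdot\bfv^m_n$ follows from the $L^q$, $q>2$, strong convergence exactly as in the proof of Theorem~\ref{thm:easy}); this yields $\bfv^m$ solving the regularized weak formulation. For the limit $m\to\infty$ one again extracts $\bfv^m\rightharpoonup\bfv$ in $W^{1,3}_{0,\sigma}(\Omega,d)$, $\bfv^m\to\bfv$ strongly in $L^q$, $q<9$, $\frac1m|\bfomega^m|\bfomega^m\to0$, and $d^{2/3}|\bfomega^m|\bfomega^m\rightharpoonup\chi$ in $L^{3/2}(\Omega)$, so that $\bfv$ satisfies the limit equation with $d^{1/3}\chi$ in place of $d\,|\bfomega|\bfomega$. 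To identify $\chi$ I would use the localized monotonicity trick of Theorem~\ref{thm:easy}: fix a ball $B$ with $\overline{2B}\Subset\Omega$, a cutoff $\eta$ as in \eqref{eq:bump}, and the solenoidal corrector $\bfw^m=\eta(\bfv^m-\bfv)-\Bog_{2B}(\nabla\eta\cdot(\bfv^m-\bfv))$; testing the regularized equation with $\bfw^m$ and using the convergences \eqref{conv:1m}--\eqref{conv:3m} shows
\begin{equation*}
  \limsup_{m\to\infty}\int_B\big(d\,|\bfomega^m|\bfomega^m-d\,|\bfomega|\bfomega\big)\cdot(\bfomega^m-\bfomega)\,\mathrm{d}\bfx\le 0,
\end{equation*}
whence (the integrand being nonnegative by the Lemma on monotonicity of the weighted $p$-curl operator, with $d$ bounded below on $\overline{2B}$) $d^{1/3}\bfomega^m\to d^{1/3}\bfomega$ and so $\bfomega^m\to\bfomega$ a.e.\ on $B$, and by exhaustion a.e.\ on $\Omega$. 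A Vitali argument using the $L^{3/2}$-bound on $d^{2/3}|\bfomega^m|\bfomega^m$ then gives $\chi=d^{2/3}|\bfomega|\bfomega$, i.e.\ $\bfv$ is a weak solution.

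\textbf{The energy equality.} The a priori bound gives $d\,|\bfomega|\bfomega\in L^{3/2}(\Omega,d^{-1/2})$ and, since $\bfv\in W^{1,3}_{0,\sigma}(\Omega,d)$ with $C^\infty_{0,\sigma}$ dense, one can test the weak formulation with $\bfv$ itself: the convective term drops because $\int_\Omega(\nl)\cdot\bfv\,\mathrm{d}\bfx=0$ (it equals $\frac12\int\nabla|\bfv|^2\cdot\text{(solenoidal, zero trace)}$, which vanishes; this passage to $\bfphi=\bfv$ must be justified by approximation, using the $L^q$, $q>2$, integrability of $\bfv$ and the weighted integrability of $\bfomega$), leaving $\int_\Omega d\,|\bfomega|^3\,\mathrm{d}\bfx=\langle\bff,\bfv\rangle=\langle\Div\bfF,\bfv\rangle=-\int_\Omega\bfF\cdot\nabla\bfv\,\mathrm{d}\bfx$.

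\textbf{Main obstacle.} The delicate point is that, unlike in Theorem~\ref{thm:easy}, there is \emph{no} bound on $\bfv^m$ in an unweighted $W^{1,2}$; the weight $d$ is not in $A_3$, so Lemma~\ref{lem:positivity} does not upgrade the weighted gradient bound to a global one. Everything must therefore be done with the weighted space $W^{1,3}_{0,\sigma}(\Omega,d)$ and its Sobolev embedding (Lemma~\ref{lem:Sobolev}), and one must check carefully that (i) the convective term and all the integrals appearing in the $\bfw^m$-testing identity are finite with only the weighted information available — in particular that $\nabla\eta\cdot(\bfv^m-\bfv)$ is bounded in $L^q_0(2B)$ for the relevant $q$, so that $\Bog_{2B}$ applies and the corrector is admissible — and (ii) the pressure is genuinely eliminated by working with solenoidal test functions throughout, so that no De Rham/Nečas argument in a weighted space is needed. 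Establishing the a priori estimate with the correct interplay between the exponent $3$, the weight power $1$, and the dual norm $L^{3/2}(\Omega,d^{-1/2})$ of $\bfF$ is where the computation has to be done precisely.
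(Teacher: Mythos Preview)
Your proof follows essentially the same route as the paper: regularize by a $W^{1,3}_{0,\sigma}(\Omega)$-coercive term, derive the uniform weighted estimate via the splitting $\bfF\cdot\nabla\bfv=(d^{-1/3}\bfF)\cdot(d^{1/3}\nabla\bfv)$ together with \eqref{eq:grad-curl-weighted}, pass to the limit by the localized monotonicity argument with the Bogovski\u{\i} corrector $\bfw^m$, identify the nonlinear limit via Vitali, and obtain the energy equality by density. The only difference is cosmetic: the paper regularizes with $-\epsilon\,\Div(|\bfD\bfve|\bfD\bfve)$ (symmetric gradient plus Korn) whereas you propose $\tfrac1m\curl(|\bfomega|\bfomega)$ (curl plus \eqref{eq:div-curl}); both give unweighted $W^{1,3}_{0,\sigma}$-coercivity at fixed $m$, so that Minty--Browder applies, and both vanish in the limit.

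One factual slip: you write that ``the weight $d$ is not in $A_3$''. In fact $d=d^1\in A_3$ precisely because $-1<1<3-1=2$, and this is exactly what makes \eqref{eq:grad-curl-weighted} available (as you correctly invoke earlier). What is true --- and what you presumably intend --- is that the weighted bound $\int_\Omega d\,|\nabla\bfv|^3<\infty$ does not yield an unweighted $L^3$ gradient bound. The paper extracts from it only $\nabla\bfv^m\in L^q(\Omega)$ for $q<3/2$ (by H\"older against $d^{-q/(3-q)}$), giving compactness in $L^r$, $r<3$, and then bootstraps to a uniform $L^9$ bound via Lemma~\ref{lem:Sobolev}. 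Your claimed strong convergence $\bfv^m\to\bfv$ in $L^q$, $q<9$, needs this intermediate unweighted $W^{1,q}$ step (or an explicit compact-embedding statement for $W^{1,3}_0(\Omega,d)$), which you should make explicit.
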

\begin{remark}
  By using fractional spaces we have that the same theorem holds for instance if
  $\partial\Omega$ is of class $C^2$ and if
  \begin{equation*}
    \bff\in \widehat{W}^{-2/3,3/2}(\Omega):=(W^{2/3,3}(\Omega)\cap L^3_0(\Omega))'.
\end{equation*}
In fact, by using Thm.~3.4 from Gei\ss{}ert, Heck, and Hieber~\cite{GHH2006} there exists
a bounded linear operator $R: \widehat{W}^{-2/3,3/2}(\Omega)\to W^{1/3,3/2}(\Omega)$, such
that $\Div R(\bff)=\bff$. Next, observe that $W^{1/3,3/2}(\Omega)=W^{1/3,3/2}_0(\Omega)$,
and consequently it follows for $\bfphi\in C^\infty_0(\Omega)$
\begin{equation*}
  \langle\bff,\bfphi\rangle=  \langle\Div R(\bff),\bfphi\rangle=-  \langle
  R(\bff),\nabla\bfphi\rangle=-  \langle  d^{-1/3} R(\bff),d^{1/3}\nabla\bfphi\rangle,
\end{equation*}
and  --with the characterization of fractional spaces
from~\eqref{eq:characterization-fractional}--
\begin{equation*}
\Big|  \langle\bff,\bfphi\rangle\Big|\leq
c\|R(\bff)\|_{1/3,3/2}\|\nabla\bfphi\|_{3,d}\leq\|\bff\|_{-2/3,3/2}\|\nabla\bfphi\|_{3,d}.
\end{equation*}
Then, the estimates follow in the same manner as before.
\end{remark}

\bigskip

Due to the fact that we have a problem without a principal part of standard $p$-Stokes
type, we need to properly approximate~\eqref{eq:BL-steady} in order to construct weak
solutions. As in the previous section we consider, for $\epsilon>0$, the following
approximate system
\begin{align}
  \left\{
    \begin{aligned}
      -\epsilon\,\Div(|\bfD\bfve|\bfD\bfve)+ \nle
      +\curl \big(d\,|\bfoe|\bfoe\big)+\nabla \pi_\epsilon&=\bff& \mbox{in $\Omega$,}
      \\
      \Div \bfve&=0& \mbox{in $\Omega$,}
      \\
      \bfve&=\mathbf{0}& \mbox{on $\partial\Omega$,}
    \end{aligned}\right.\label{eq:approximation}
\end{align}
which falls within in the classical setting as studied starting with the work of
Lady\v{z}henskaya~\cite{Lad1969} and Lions~\cite{Lio1969}.
\begin{remark}
  At this stage (existence of weak solutions for the approximate problem) the power of
  $d(\bfx)$ entering in the equations does not play any specific role.
\end{remark}
%
%
With the same tools already used, we have the following result.
\begin{theorem}
  \label{thm:existence-approximation}
  For any $\epsilon>0$ and for $\bff=\Div\bfF$ with $\bfF\in L^{3/2}(\Omega)$ there
  exists a weak solution $\bfve\in W^{1,3}_{0,\sigma}(\Omega)$ which satisfies
  \begin{equation}
    \label{eq:weak-formulation}
    \int_\Omega
    \epsilon |\bfD\bfve|\bfD\bfve:\bfD\bfphi
    +(\nle)\cdot\bfphi+ d\,|\bfoe|\bfoe\cdot\curl\bfphi\,\mathrm{d}\bfx=-\int_\Omega\bfF \cdot\nabla\bfphi\,\mathrm{d}\bfx,
  \end{equation}
  for all $\bfphi\in W^{1,3}_{0,\sigma}(\Omega)$. 
  The function $\bfve$ satisfies the energy-type estimate
  \begin{equation}
    \label{eq:apriori-epsilon1}
    \epsilon\|\bfve\|_{W^{1,3}_{0}}^3+\int_\Omega d\,|\bfoe|^3\leq
    \frac{C}{\sqrt{\epsilon}}    \|\bfF\|_{{3/2}}^{3/2}. 
  \end{equation}
  Moreover, if $\bfF\in  L^{3/2}(\Omega,d^{-1/2})$,
then
  \begin{equation}
    \label{eq:apriori-epsilon2}
    \epsilon\|\bfve\|_{W^{1,3}}^3+\int_\Omega d\,|\bfoe|^3\leq
    C\int_\Omega\frac{|\bfF|^{3/2}}{d^{1/2}}\,\mathrm{d}\bfx=C\|\bfF\|_{3/2,d^{-1/2}}^{3/2}, 
\end{equation}
for some constant $C$ independent of $\epsilon$. 
\end{theorem}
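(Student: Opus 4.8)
The plan is to run the by-now routine Galerkin scheme for monotone operators, exactly as recalled before Theorem~\ref{thm:easy}: the operator here is the \emph{non-degenerate} $3$-Stokes operator plus the monotone lower-order term $\curl(d\,|\bfoe|\bfoe)$, so that $p=3>\tfrac95$ and the classical Minty--Browder argument applies without any Lipschitz truncation. Concretely I would fix a Galerkin basis $(\bfphi_j)\subset C^\infty_{0,\sigma}(\Omega)$ dense in $W^{1,3}_{0,\sigma}(\Omega)$, put $V_n:=\mathrm{Span}\{\bfphi_1,\dots,\bfphi_n\}$, and solve the finite-dimensional system; existence of $\bfve^n\in V_n$ follows from Brouwer's theorem in the ``zero of a coercive continuous vector field'' form, the coercivity coming from testing with $\bfve^n$, using $\int_\Omega(\bfoe^n\times\bfve^n)\cdot\bfve^n\,\mathrm{d}\bfx=0$ together with Korn~\eqref{eq:Korn} and \Poincare{}~\eqref{eq:Poincare}. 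The same choice $\bfphi=\bfve^n$ yields the energy identity
\begin{equation*}
  \epsilon\int_\Omega|\bfD\bfve^n|^3\,\mathrm{d}\bfx+\int_\Omega d\,|\bfoe^n|^3\,\mathrm{d}\bfx=-\int_\Omega\bfF:\nabla\bfve^n\,\mathrm{d}\bfx .
\end{equation*}

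The only genuinely new feature compared with Theorem~\ref{thm:easy} is the two ways of estimating this right-hand side. For~\eqref{eq:apriori-epsilon1} I would bound it by $C_K\|\bfF\|_{3/2}\|\bfD\bfve^n\|_3$ and absorb, first extracting $\|\bfD\bfve^n\|_3\le(C_K\|\bfF\|_{3/2}/\epsilon)^{1/2}$ and then the stated bound. For the $\epsilon$-independent estimate~\eqref{eq:apriori-epsilon2} I would instead split $\bfF=d^{-1/3}\bfF\cdot d^{1/3}$ and apply H\"older with exponents $(\tfrac32,3)$, so that the right-hand side is controlled by $\|\bfF\|_{3/2,d^{-1/2}}\big(\int_\Omega d\,|\nabla\bfve^n|^3\,\mathrm{d}\bfx\big)^{1/3}$; since $\alpha=1$ and $p=3$ satisfy $-1<\alpha<p-1$, the weighted div--curl estimate~\eqref{eq:grad-curl-weighted} bounds $\int_\Omega d\,|\nabla\bfve^n|^3$ by $C\int_\Omega d\,|\bfoe^n|^3$, which Young's inequality then absorbs into the left-hand side; one further use of Korn and \Poincare{} gives~\eqref{eq:apriori-epsilon2}. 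Either way the bounds are uniform in $n$ with $\epsilon>0$ fixed.

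For the limit $n\to\infty$ I would extract, along a subsequence, $\bfve^n\rightharpoonup\bfve$ in $W^{1,3}_{0,\sigma}(\Omega)$, $\bfve^n\to\bfve$ in every $L^q(\Omega)$ with $q<\infty$ by Rellich, and $|\bfD\bfve^n|\bfD\bfve^n\rightharpoonup\overline{\bfS}$, $d^{2/3}|\bfoe^n|\bfoe^n\rightharpoonup\chi$ in $L^{3/2}(\Omega)$. The convective term passes to the limit because $\bfoe^n\times\bfve^n\rightharpoonup\bfoe\times\bfve$ in $L^{3/2}(\Omega)$ (weak $L^3$ against strong $L^6$) and $\int_\Omega(\bfoe^n\times\bfve^n)\cdot\bfve^n\,\mathrm{d}\bfx=0$ identically, so passing to the limit in the Galerkin identities and using density yields~\eqref{eq:weak-formulation} with $\epsilon|\bfD\bfve|\bfD\bfve$ and $d\,|\bfoe|\bfoe$ replaced by the as-yet unidentified $\epsilon\overline{\bfS}$ and $d^{1/3}\chi$.

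Identifying these limits is the single Minty step. The operator $\mathcal{B}(\bfw):=-\epsilon\Div(|\bfD\bfw|\bfD\bfw)+\curl(d\,|\curl\bfw|\curl\bfw)$ is bounded, hemicontinuous and monotone on $W^{1,3}_{0,\sigma}(\Omega)$ --- monotonicity of each summand being the content of the Lemma stated just before the proof of Theorem~\ref{thm:easy} --- while $\bfve^n$ is an admissible test function in the Galerkin system and the convective term drops when tested against $\bfve^n$, so that $\limsup_{n\to\infty}\langle\mathcal{B}(\bfve^n),\bfve^n\rangle=\langle\bff,\bfve\rangle$ equals the action on $\bfve$ of the limiting functional; the standard choice $\bfw=\bfve-\lambda\bfphi$ with $\lambda\downarrow0$ then forces $\overline{\bfS}=|\bfD\bfve|\bfD\bfve$ and $d^{1/3}\chi=d\,|\bfoe|\bfoe$, and testing the resulting identity with $\bfve$ reproduces~\eqref{eq:apriori-epsilon1} and, under the stronger hypothesis on $\bfF$, the $\epsilon$-uniform bound~\eqref{eq:apriori-epsilon2}. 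I do not expect a genuine obstacle: with $p=3$ the monotonicity argument is classical and the cubic convective term is tamed by Rellich compactness in $W^{1,3}(\Omega)$, so the whole difficulty is bookkeeping --- tracking the powers of $\epsilon$ in~\eqref{eq:apriori-epsilon1} and checking that $\alpha=1$, $p=3$ lie in the admissible range of~\eqref{eq:grad-curl-weighted}, which is precisely what makes the weighted H\"older splitting close.
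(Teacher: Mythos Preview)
Your proposal is correct and follows essentially the same route as the paper. The paper itself declines to give the existence part (``We do not give the easy proof of this result we just show the basic a priori estimates'') and only records the two ways of estimating the right-hand side after testing with $\bfve$; you fill in the omitted details with the Galerkin/Brouwer/Minty--Browder scheme exactly as was spelled out in the proof of Theorem~\ref{thm:easy}, and your derivations of~\eqref{eq:apriori-epsilon1} and~\eqref{eq:apriori-epsilon2} (the $d^{-1/3}\bfF\cdot d^{1/3}\nabla\bfve$ splitting, H\"older with $(\tfrac32,3)$, then~\eqref{eq:grad-curl-weighted} and Young) match the paper's computations line by line.
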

\begin{remark}
  The approximation in~\eqref{eq:approximation} is introduced only as a mathematical tool,
  no modeling is hidden inside the choice for the perturbation.

  The regularization can be also done in the following way, respecting the rotational
  structure of the equation:
  \begin{equation*}
    \nle+\curl
    \big((\epsilon+d)|\bfoe|\bfoe\big)+\nabla \pi_\epsilon=\bff\qquad \mbox{in $\Omega$.}
  \end{equation*}
  For this approximation one can use the fact that $d+\epsilon\geq \epsilon>0$ and
  $\|\bfoe\|_{p}\sim \|\nabla\bfv\|_{p}$ for functions which are divergence-free and zero
  at the boundary by~\eqref{eq:div-curl}. We preferred the more classical way in order to
  use directly known results, being completely equivalent in terms of existence theorems.
\end{remark}
\begin{proof}[Proof of Theorem~\ref{thm:existence-approximation}]
We do not give the easy proof of this result we just show the basic a priori estimates. 
The first $\epsilon$-dependent estimate~\eqref{eq:apriori-epsilon1} is  obtained by using as test function
$\bfve$ itself, integrating by parts, and using H\"older inequality to estimate the
right-hand side.

In the following we also need estimates which are independent of $\epsilon>0$ and choosing
again $\bfphi=\bfve$ in~\eqref{eq:weak-formulation} the right-hand side can be estimated
by
%
  \begin{equation*}
\int_\Omega d^{-1/2} \bfF\cdot
    d^{1/3}\nabla\bfve\,\mathrm{d}\bfx\leq
    C\left(\int_\Omega\frac{|\bfF|^{3/2}}{d^{1/2}}\,\mathrm{d}\bfx\right)^{2/3}\left(\int_\Omega
      d\,|\nabla\bfve|^3\,\mathrm{d}\bfx\right)^{1/3}, 
\end{equation*}
using H\"older's inequality.
On account of~\eqref{eq:grad-curl-weighted} and Young's inequality we obtain further
\begin{equation}
    \label{eq:apriori-epsilon2bis}
    \epsilon\|\bfve\|_{W^{1,3}}^3+\int_\Omega d\,|\bfoe|^3+d\,|\nabla\bfve|^3 \,\mathrm{d}\bfx\leq
    C\int_\Omega\frac{|\bfF|^{3/2}}{d^{1/2}}\,\mathrm{d}\bfx,
\end{equation}
hence~\eqref{eq:apriori-epsilon2} with a constant $C$ independent of $\epsilon$.

Finally, for $q<3/2$ we have by H\"older's inequality
\begin{equation*}
  \begin{aligned}
    \int_\Omega|\nabla \bfve|^q \,\mathrm{d}\bfx&= \int_\Omega d^{-q/3}\,d^{q/3}|\nabla
    \bfve|^q \,\mathrm{d}\bfx
    \\
    &\leq\left(\int_\Omega d^{-\frac{q}{{3-q}}}\,\mathrm{d}\bfx\right)^{(3-q)/3} \left(\int_\Omega
      d\,|\nabla \bfve|^3 \,\mathrm{d}\bfx\right)^{q/3}
    \\
&    \leq c \left(\int_\Omega d\,|\nabla \bfve|^3 \,\mathrm{d}\bfx\right)^{q/3},
  \end{aligned}
\end{equation*}
such that
\begin{equation*}
 \left(   \int_\Omega|\nabla \bfve|^q\,\mathrm{d}\bfx\right)^{3/q} \leq c \int_\Omega
 d\,|\bfoe|^3\,\mathrm{d}\bfx\leq C\|\bfF\|_{3/2,d^{-1/2}}^{3/2},
\end{equation*}
using~\eqref{eq:apriori-epsilon2bis}.
This proves then that the solution to~\eqref{eq:approximation} satisfies also the
estimate
\begin{equation}
  \label{eq:estimate-not-epsilon}
 \|\nabla\bfve\|_{L^q}\leq C(q,\Omega,\|\bfF\|_{3/2,d^{-1/2}}). 
\end{equation}
for all $q<\frac{3}{2}$.
\end{proof}
Collecting all estimates we can give now the main existence result for the generalized
Baldwin-Lomax model~\eqref{eq:BL-steady}, passing to the limit as $\epsilon\to0$.
\begin{proof}[Proof of Theorem~\ref{thm:existence}]
  Using the existence result from Theorem~\ref{thm:existence-approximation} we obtain a
  sequence of solutions $(\bfve)\subset W^{1,3}_{0,\sigma}(\Omega)$
  to~\eqref{eq:approximation}. From the uniform
  estimates~\eqref{eq:apriori-epsilon2}-\eqref{eq:estimate-not-epsilon} we infer the
  existence of a limit function $\bfv\in W^{1,q}_{0,\sigma}(\Omega)$ such that along a
  sequence $\epsilon_{m}\to0$ and for $\bfv^{m}:=\bfv_{\epsilon_{m}}$ it holds
  \begin{align}
    \bfvm&\rightharpoonup \bfv\qquad\text{in }W^{1,q}_{0,\sigma}(\Omega)\qquad
    \forall\,q<\frac{3}{2},\label{eq:a}
    \\
    \bfvm&\rightarrow \bfv\qquad\text{in
    }L^{r}_{\sigma}(\Omega)\qquad\forall\,r<3,\label{b}
    \\
    \bfvm&\rightarrow \bfv\qquad \text{a.e. in }\Omega,\label{c}
    \\
    \varepsilon_{m}|\bfD\bfvm|\bfD\bfvm&\rightarrow \mathbf{0}\qquad\text{in
    }L^{3/2}_{0}(\Omega).\label{d}
  \end{align}
  At this point we observe that it is not possible to pass to the limit as $\epsilon\to0$
  in the equations directly by monotonicity arguments since
  $\frac{3}{2}<\frac{9}{5}$. Hence the difficulty will be again proving that $\bfv$ is a
  weak solution to~\eqref{eq:BL-steady}. We will employ a local argument similar to the
  previous section. For all compact sets $K\Subset\Omega$ it holds that
%
%
  \begin{equation*}
    \begin{aligned}
      c_0 \big(\min_{\bfx\in K}d(\bfx)\big)\int_K|\nabla \bfvm|^3\,\mathrm{d}\bfx&\leq c_0\int_K d\,|\nabla
      \bfvm|^3\,\mathrm{d}\bfx
      \\
      &\leq c_0 \int_\Omega d\,|\nabla \bfvm|^3\,\mathrm{d}\bfx\leq C(\Omega,\|\bfF\|_{3/2,d^{-1/2}}),
    \end{aligned}
  \end{equation*}
  using~\eqref{eq:apriori-epsilon2}. This
  shows that (up to possibly another sub-sequence)
  \begin{equation}
\label{conv:loc}
    \begin{aligned}
      & (\nabla \bfvm)_{|K}\rightharpoonup \nabla\bfv_{|K}\qquad\text{in }L^{3}(K)\qquad
      \forall\,K\Subset\Omega,
      \\
      &(\bfvm)_{|K}\rightarrow \bfv_{|K}\qquad\text{in }L^{r}(K)\qquad\forall\,r<\infty.
    \end{aligned}
  \end{equation}
  This proves that 
  \begin{equation*}
    \int_\Omega (\nlm)\cdot\bfphi\,\mathrm{d}\bfx\xrightarrow{m\to\infty}
    \int_\Omega(\nl)\cdot\bfphi\,\mathrm{d}\bfx\qquad \forall\,\bfphi\in C^{\infty}_{0,\sigma}(\Omega),
  \end{equation*}
  while passing to the limit in the nonlinear term requires again a local  approach, as
  developed in the previous section.

  Based on the previous observations if $\overline\bfS$ denotes the
  $L^{3/2}_{loc}(\Omega)$-weak limit of $d\,|\bfoe|\bfoe$, which exists by using the uniform
  bound coming from~\eqref{eq:apriori-epsilon2}, we obtain the limit system
  \begin{align}
    \left\{\begin{array}{rc} \nl
        +\curl \overline\bfS+\nabla \pi=\Div\bfF&
        \mbox{in $\Omega$,}
        \\
        \Div \bfv=0\qquad& \mbox{in $\Omega$,}
        \\
        \bfv=0\qquad\quad& \mbox{ \,on $\partial\Omega$,}
      \end{array}\right.\label{eq:limit}
  \end{align}
  where the first equation is satisfied in the sense of distributions over $\Omega$. The
  remaining effort is to show that $\overline\bfS=d\,|\bfomega|\bfomega$.

  Observe also that at this point we have that $\nle
 \in L^s_{loc}(\Omega)\subset L^1_{loc}(\Omega) $ for all $s<3$, but not uniformly in $\epsilon$.

 The uniform estimates imply that $\bfvm\in W^{1,q}_{0}(\Omega)$, for all $q<3/2$, hence
 $\bfvm\in L^r(\Omega)$, for all $r<3$. This is not enough to show    $ \nlm
 \in L^1(\Omega)$, hence testing with $\bfv$ itself seems not possible. 
%
%

 First, we improve the known summability of the solutions, by observing that
 applying~\eqref{eq:gen-Sobolev} to our case ($p=3$, $\delta=1/3$) implies
  \begin{equation*}
    \Big\|\bfvm(\bfx)-\dashint_\Omega \bfvm(\bfy)\,\mathrm{d}\bfy\Big\|_{9}^3\leq
    C\|d^{1/3} \nabla \bfvm\|_3^3=\int_\Omega d\,|\nabla \bfvm|^3\,\mathrm{d}\bfx\leq
    C\|\bfF\|_{3/2,d^{-1/2}}^{3/2},
  \end{equation*}
  uniformly in $\varepsilon$.
  Next we recall that by H\"older inequality
$    \Big\|\dashint_\Omega f\,\mathrm{d}\bfx\Big\|_{p}\leq\|f\|_{p}, $
  such that
  \begin{equation*}
    \|f\|_p-\Big\|\dashint_\Omega f(\bfy)\,\mathrm{d}\bfy\Big\|_{p}\leq\Big\|f(\bfx)-\dashint_\Omega f(\bfy)\,\mathrm{d}\bfy\Big\|_{p},
  \end{equation*}
  for any $f\in L^p(\Omega)$.
  This yields due to the embedding into $L^r(\Omega)\subset
  L^1(\Omega)$ for $r<3$
  \begin{equation*}
    \begin{aligned}
      \|\bfvm\|_9&\leq \Big\|\dashint_\Omega \bfvm\,\mathrm{d}\bfy\Big\|_{9}+C\|\bfF\|_{3/2,d^{-1/2}}^{1/2}
      \\
      &\leq
      \frac{1}{|\Omega|^{8/9}}\|\bfvm\|_1+C\|\bfF\|_{3/2,d^{-1/2}}^{1/2}\leq c(|\Omega|,\|\bfF\|_{3/2,d^{-1/2}}).
    \end{aligned}
  \end{equation*}
  Finally, we obtain
  \begin{equation*}
    \nlm 
\in L^s(\Omega)\qquad\forall\,s<\frac{9}{7},
  \end{equation*}
  uniformly in $m\in\N$.  We can also improve~\eqref{b} to
  \begin{equation*}
    \bfvm\rightarrow \bfv\qquad\text{in
    }L^{r}_{\sigma}(\Omega)\qquad\forall\,r<9.
  \end{equation*}
  Now we consider the difference of~\eqref{eq:approximation} and~\eqref{eq:limit} and
%
%
%
%
%
%
%
%
%
%
%
%
%
%
%
%
%
%
%
%
%
%
%
%
localize as in Section~\ref{sec:Classical-BL}, 
  taking into account~\eqref{conv:loc}. Given the bump function as in~\eqref{eq:bump} we
  define
  \begin{align*}
  \bfw^{m} := \eta\, (\bfv^m - \bfv) - \Bog_{2B}( \nabla \eta \cdot
  (\bfv^m - \bfv))\in
 W^{1,3}_{0,\sigma}(2B)\subset W^{1,3}_{0,\sigma}(\Omega),
  \end{align*}
  and we have, due to the $W^{1,3}_{loc}(\Omega)$-bounds from cf.~\eqref{conv:loc}, that
  the same convergence as in~\eqref{conv:1m}-\eqref{conv:2m}-\eqref{conv:3m} holds true.
  Now we test the difference between the $\epsilon_{m}$-regularized system and the original
  one with $\bfw^{m}
  \in  W^{1,3}_{0,\sigma}(\Omega)$ and by using the same argument as before we get
  %
  \begin{align*}
    \lim_{m\to+\infty}\int_{B}&
    \big(d\,|\bfon|\bfon-d|\bfomega|\bfomega\big)\cdot \big(\bfon-\bfomega\big)\,\mathrm{d}\bfx=0.
  \end{align*}
This can be used to show that 
  \begin{align*}
    \bfon&\rightarrow \bfomega\quad \text{in}\quad L^3(B),
  \end{align*}
  and since the ball $B\Subset\Omega$ is arbitrary, this implies
  $\overline\bfS=d\,|\bfomega|\bfomega$.

\medskip

We finally prove the energy-type balance. We observe that the equality
  \begin{equation*}
    \int_\Omega(\nl)
\cdot\bfphi +
    d\,|\bfomega|\bfomega\cdot\curl\bfphi\,\mathrm{d}\bfx=-\int_\Omega\bfF
    \cdot\nabla\bfphi\,\mathrm{d}\bfx,
  \end{equation*}
by density makes sense also for $\bfphi\in W^{1,3}_{0,\sigma}(\Omega,d)$, being the
integrals well defined by the following estimates for $q=\frac{9}{7}<\frac{3}{2}$
\begin{equation*}
\begin{aligned}
  &\left|
    \int_\Omega
(\nl)\cdot\bfphi\,\mathrm{d}\bfx\right|\leq\|\nabla\bfv\|_q\|\bfv\|_9\|\bfphi\|_9\leq 
  c \|\bfv\|^2_{W^{1,3}_0(\Omega,d)}\|\bfphi\|_{W^{1,3}_0(\Omega,d)},
  \\
  &\left|\int_\Omega d\,|\bfomega|\bfomega\cdot\curl\bfphi\,\mathrm{d}\bfx\right|= \left|\int_\Omega
    d^{2/3}\,|\bfomega|\bfomega\cdot d^{1/3}\curl\bfphi\right|\leq c
  \|\bfv\|^2_{W^{1,3}_0(\Omega,d)}\|\bfphi\|_{W^{1,3}_0(\Omega,d)},
  \\
  &\left|\int_\Omega\bfF \cdot\nabla\bfphi\,\mathrm{d}\bfx\right|\leq\|\bfF\|_{3/2,d^{-1/2}}
  \|\bfphi\|_{W^{1,3}_0(\Omega,d)}. 
\end{aligned}
\end{equation*}
Note that we used again~\eqref{eq:gen-Sobolev} with $p=3$ and $\delta=\frac{1}{3}$.
Hence, by setting $\bfphi=\bfv$ and by observing that
\begin{equation*}
  \int_\Omega(\nl)
\cdot\bfv\,\mathrm{d}\bfx=0,
\end{equation*}
once it is well-defined, we get the claimed energy equality.
\end{proof}
\begin{remark}
  Since the convergence is based on local $W^{1,3}$-estimates, the convergence of the
  stress tensor does not depend on the power of the distance, while the range of $\alpha$
  is crucial to handle the convective term and to give a proper meaning to the equations in
  the sense of distributions.
\end{remark}
\section{Extension to more general cases}
In this section we consider the same problem as in~\eqref{eq:BL-steady} but we consider
different values of both the exponent $p$ and of the weight $\alpha$. Some results follow
in a straightforward way since $p=3$ (the main argument of monotonicity requires in fact
$p>\frac{9}{5}$, while others for smaller values of $p$ require a more technical argument
with a Lipschitz truncation of the test functions).
\subsection{Generalization to other values of the parameter $\alpha$, but still with
  $p=3$.} 
We consider now the possible extension to larger values of the parameter $1\leq
\alpha<2$. As explained before the value $\alpha=2=3-1$ is critical as it does not allow
to bound the weighted gradient by the weighted curl. We study now the system
\begin{align}
\label{eq:BL-steady-generalized}
  \left\{\begin{aligned}
\nl
+\curl \big(d^\alpha|\bfomega|\bfomega\big)+\nabla \pi&=\bff& \mbox{in
      $\Omega$,}
    \\
    \Div \bfv&=0\qquad& \mbox{in $\Omega$,}
    \\
    \bfv&=\mathbf{0}\qquad\quad& \mbox{ \,on $\partial\Omega$.}
\end{aligned}\right.
\end{align}

We write just the a priori estimates, since the approximation and the passage to the limit
is exactly the same as in Theorem~\ref{thm:existence-approximation} being based on local
estimates for the gradient in $L^{3}(K)$.

From the H\"older inequality we get for $1\leq\alpha<2$ and if $\frac{\alpha q}{3-q}<1$
(which is if $1\leq q<\frac{3}{1+\alpha}$) that 
\begin{equation*}
  \begin{aligned}
    \|\nabla \bfv\|_q^3&    \leq c \int_\Omega d^\alpha\,|\nabla\bfv|^3\,\mathrm{d}\bfx \qquad
    \forall\,\bfv\in W^{1,3}(\Omega,d^{\alpha}),
  \end{aligned}
\end{equation*}
Next, the Sobolev embedding from Lemma~\ref{lem:Sobolev} yields 
  \begin{equation*}
    \Big\|\bfv(\bfx)-\dashint_\Omega \bfv(\bfy)\,\mathrm{d}\bfy\Big\|_{9/\alpha}^3\leq
    C\int_{\Omega}d^\alpha |\nabla \bfv|^3\,\mathrm{d}\bfx\qquad \forall\,\bfv\in W^{1,3}(\Omega,d^{\alpha}).
  \end{equation*}
At this point the convective term satisfies
\begin{equation*}
 (\nabla \bfv)\bfv\in L^{s}(\Omega)\qquad\forall\,s<\frac{9}{3+4\alpha},
\end{equation*}
and $s\geq 1$ if $\alpha<\frac{3}{2}$. Under this assumptions the proof follows as before
and we can prove the following result where we distinguish two cases depending if $\alpha$
is small enough to allow the solution to have a proper sense. A different
formulation for the larger values of $\alpha$. We write results in the terms of $\bfF$ such that
$\bff=\Div\bfF$, but this can be translated in terms of $\bff$ only, again
using~\cite{GHH2006} and~\eqref{eq:characterization-fractional}.
%
\begin{theorem}
\label{thm:thm_existence_approximation-2}
\begin{enumerate}
\item[(a)] Let $\alpha<\frac{6}{5}$ and suppose that $\bff=\Div\bfF$ for some  $\bfF\in
  L^{3/2}(\Omega,d^{-\alpha/2})$. Then, there exists a weak solution $\bfv\in
  W^{1,3}_{0,\sigma}(\Omega,d^\alpha)$ of the problem~\eqref{eq:BL-steady} such that
  \begin{equation*}
    \int_\Omega(\nl)
\cdot\bfphi+
    d^\alpha\,|\bfomega|\bfomega\cdot\curl\bfphi\,\mathrm{d}\bfx=-\int_\Omega\bfF
    \cdot\nabla\bfphi\,\mathrm{d}\bfx\qquad\forall\,\bfphi\in  C^{\infty}_{0,\sigma}(\Omega), 
  \end{equation*}
and 
\begin{equation*}
  \int_\Omega
    d^\alpha\,|\bfomega|^3\,\mathrm{d}\bfx=-\int_\Omega\bfF
    \cdot\nabla\bfv \,\mathrm{d}\bfx.
\end{equation*}
\item[(b)] Let $\frac{6}{5}\leq \alpha<\frac{3}{2}$ and suppose that  $\bff=\Div\bfF$ with $\bfF\in
L^{3/2}(\Omega,d^{-\alpha/2})$. Then, there exists a weak solution $\bfv\in
W^{1,3}_{0,\sigma}(\Omega,d^\alpha)$ of the problem~\eqref{eq:BL-steady} such that
  \begin{equation*}
    \int_\Omega(\nl)
\cdot\bfphi+
    d^\alpha\,|\bfomega|\bfomega\cdot\curl\bfphi\,\mathrm{d}\bfx=-\int_\Omega\bfF
    \cdot\nabla\bfphi\,\mathrm{d}\bfx\qquad\forall\,\bfphi\in C^{\infty}_{0,\sigma}(\Omega).
  \end{equation*}
  \end{enumerate}
\end{theorem}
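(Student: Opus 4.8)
The plan is to imitate, almost line by line, the proof of Theorem~\ref{thm:existence}, replacing the weight $d$ by $d^{\alpha}$ everywhere and keeping careful track of how the integrability exponents deteriorate. First I would regularize \eqref{eq:BL-steady-generalized} as in \eqref{eq:approximation}, adding the $p$-Stokes type term $-\epsilon\,\Div(|\bfD\bfve|\bfD\bfve)$, and invoke the scheme of Theorem~\ref{thm:existence-approximation} -- Galerkin approximation, Minty--Browder (admissible since $p=3>\frac{9}{5}$, the term $\curl(d^{\alpha}|\bfoe|\bfoe)$ being monotone by the Lemma preceding the proof of Theorem~\ref{thm:easy} and bounded on $W^{1,3}_{0,\sigma}(\Omega)$ because $d^{\alpha}$ is bounded), and compactness for the convective term -- to obtain, for each $\epsilon>0$, a solution $\bfve\in W^{1,3}_{0,\sigma}(\Omega)$. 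Testing with $\bfve$, splitting $\bfF\cdot\nabla\bfve=(d^{-\alpha/3}\bfF)\cdot(d^{\alpha/3}\nabla\bfve)$ by H\"older's inequality and using \eqref{eq:grad-curl-weighted} (valid because $1\le\alpha<2=p-1$) together with Young's inequality, gives the $\epsilon$-uniform a priori bound $\epsilon\|\bfve\|_{W^{1,3}_{0}}^{3}+\int_{\Omega}d^{\alpha}\big(|\bfoe|^{3}+|\nabla\bfve|^{3}\big)\,\mathrm{d}\bfx\le C\|\bfF\|_{3/2,d^{-\alpha/2}}^{3/2}$, from which, as already recorded before the statement, $\|\nabla\bfve\|_{q}\le C$ for $q<\frac{3}{1+\alpha}$, $\|\bfve\|_{9/\alpha}\le C$ via Lemma~\ref{lem:Sobolev} with $p=3$, $\delta=\frac{\alpha}{3}$, and $\epsilon\,|\bfD\bfve|\bfD\bfve\to\mathbf{0}$ in $L^{3/2}_{0}(\Omega)$.

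Next I would pass to the limit along a sequence $\epsilon_{m}\to0$, setting $\bfvm:=\bfv_{\epsilon_{m}}$. The bounds above yield $\bfvm\rightharpoonup\bfv$ in $W^{1,q}_{0,\sigma}(\Omega)$ for all $q<\frac{3}{1+\alpha}$, $\bfvm\to\bfv$ in $L^{r}(\Omega)$ for all $r<\frac{9}{\alpha}$ (compact Sobolev embedding plus interpolation with the $L^{9/\alpha}$ bound), $\bfvm\to\bfv$ a.e.\ in $\Omega$, and -- since $\big(\min_{K}d\big)^{\alpha}\int_{K}|\nabla\bfvm|^{3}\,\mathrm{d}\bfx\le\int_{\Omega}d^{\alpha}|\nabla\bfvm|^{3}\,\mathrm{d}\bfx\le C$ for $K\Subset\Omega$ -- also $\nabla\bfvm\rightharpoonup\nabla\bfv$ and $\bfon\rightharpoonup\bfomega$ in $L^{3}(K)$ for every $K\Subset\Omega$, while $d^{\alpha}|\bfon|\bfon\rightharpoonup\overline{\bfS}$ in $L^{3/2}_{loc}(\Omega)$. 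Weak lower semicontinuity gives $\int_{\Omega}d^{\alpha}|\nabla\bfv|^{3}\,\mathrm{d}\bfx<\infty$, hence $\bfv\in W^{1,3}_{0,\sigma}(\Omega,d^{\alpha})$ (equivalence of norms for $\alpha<p-1=2$). Passing to the limit in the weak formulation against $\bfphi\in C^{\infty}_{0,\sigma}(\Omega)$ -- the convective term converging by the local $W^{1,3}\times L^{r}$ convergences, and $(\nabla\bfv)\bfv\in L^{s}(\Omega)$ with $s<\frac{9}{3+4\alpha}$, which is $>1$ since $\alpha<\frac{3}{2}$, so the term makes distributional sense -- produces the limit system $\nl+\curl\overline{\bfS}+\nabla\pi=\Div\bfF$ in $\mathcal{D}'(\Omega)$, leaving only the identification $\overline{\bfS}=d^{\alpha}|\bfomega|\bfomega$.

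For that identification I would run the local monotonicity argument of Section~\ref{sec:Classical-BL}, which, as noted in the remark after Theorem~\ref{thm:existence}, is insensitive to $\alpha$ because $d^{\alpha}$ is comparable to a positive constant on every $\overline{2B}\Subset\Omega$: fix such a ball, a bump $\eta$ as in \eqref{eq:bump}, set $\bfw^{m}:=\eta\,(\bfvm-\bfv)-\Bog_{2B}\big(\nabla\eta\cdot(\bfvm-\bfv)\big)\in W^{1,3}_{0,\sigma}(2B)$ (a legitimate test function by the local $W^{1,3}$-bounds and Theorem~\ref{thm:bog}), test the difference of the $\epsilon_{m}$-system and the limit system with $\bfw^{m}$, and check term by term -- using the strong $L^{r}$ convergence of $\bfvm$, the continuity of $\Bog_{2B}$, the weak $L^{3/2}_{loc}$ convergence of the stress and $\epsilon_{m}|\bfD\bfvm|\bfD\bfvm\to\mathbf{0}$ -- that the right-hand side tends to $0$, so that $\int_{B}\big(d^{\alpha}|\bfon|\bfon-d^{\alpha}|\bfomega|\bfomega\big)\cdot\big(\bfon-\bfomega\big)\,\mathrm{d}\bfx\to0$; since $d^{\alpha}$ is bounded above and below on $B$ the monotonicity/convexity argument forces $\bfon\to\bfomega$ in $L^{3}(B)$, hence a.e.\ in $\Omega$ by the arbitrariness of $B$, and Vitali's theorem (with the $L^{3/2}(\Omega)$ bound on $d^{\alpha}|\bfon|\bfon$) then gives $\overline{\bfS}=d^{\alpha}|\bfomega|\bfomega$, so $\bfv$ is a weak solution; this settles part~(b) and the first half of part~(a). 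Finally, for the energy equality in part~(a) I would extend the weak formulation to test functions $\bfphi\in W^{1,3}_{0,\sigma}(\Omega,d^{\alpha})$ by density, the three terms being controlled, after \eqref{eq:gen-Sobolev} with $p=3$, $\delta=\frac{\alpha}{3}$, by $\big(\int_{\Omega}d^{\alpha}|\bfomega|^{3}\big)^{2/3}\big(\int_{\Omega}d^{\alpha}|\curl\bfphi|^{3}\big)^{1/3}$, by $\|\bfF\|_{3/2,d^{-\alpha/2}}\|\bfphi\|_{W^{1,3}_{0}(\Omega,d^{\alpha})}$, and by $\|\nabla\bfv\|_{q}\|\bfv\|_{9/\alpha}\|\bfphi\|_{9/\alpha}$ with $\frac1q=1-\frac{2\alpha}{9}$, the choice $q<\frac{3}{1+\alpha}$ being possible \emph{exactly} when $\alpha<\frac{6}{5}$ -- which is the only place this restriction enters -- and then taking $\bfphi=\bfv$ and using $\int_{\Omega}(\nl)\cdot\bfv\,\mathrm{d}\bfx=0$. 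The main obstacle is genuinely the bookkeeping of the exponents $\frac{3}{1+\alpha}$, $\frac{9}{\alpha}$, $\frac{9}{3+4\alpha}$ and of the two thresholds $\alpha=\frac{6}{5}$ (needed for the energy identity) and $\alpha=\frac{3}{2}$ (needed for the distributional meaning of the convective term); no analytic ingredient beyond those already used for Theorem~\ref{thm:existence} is required.
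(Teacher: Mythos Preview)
Your proposal is correct and follows essentially the same approach as the paper's own proof, which merely states that ``the proof follows exactly the same lines of that of Theorem~\ref{thm:existence}'' and then records the H\"older computation $\big|\int_{\Omega}(\nl)\cdot\bfv\,\mathrm{d}\bfx\big|\le\|\bfomega\|_{3/(1+\alpha)-\varepsilon}\|\bfv\|_{9/\alpha}^{2}$, valid precisely when $\frac{1+\alpha}{3}+\frac{2\alpha}{9}<1$, i.e.\ $\alpha<\frac{6}{5}$. You have reconstructed the full argument (regularization, $\epsilon$-uniform a~priori bounds, local monotonicity, Vitali, density extension for the energy equality) with the correct exponent bookkeeping, and you have identified the two thresholds $\alpha=\frac{6}{5}$ and $\alpha=\frac{3}{2}$ at exactly the points where the paper uses them.
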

\begin{proof}
  The proof follows exactly the same lines of that of Theorem~\ref{thm:existence}. We
  observe that in order to use $\bfv$ itself as test function, hence to cancel the convective
  term, we need for instance the estimate
  \begin{equation*}
   \Big|
   \int_{\Omega}(\nl)\cdot\bfv\,\mathrm{d}\bfx\Big|\leq\|\bfomega\|_{3/(1+\alpha)-\varepsilon}\|\bfv\|_{9/\alpha}^{2}
   \quad\text{for  some }\varepsilon>0,
  \end{equation*}
which holds true  if  $\frac{1+\alpha}{3}+\frac{2\alpha}{9}<1$ or, equivalently, if  $\alpha<\frac{6}{5}$.

In the other case, the convective term is still in $L^{1}(\Omega)$, but the function
$\bfv$ is not regular enough to be used globally as test function and to write the
energy-type estimate.
\end{proof}

\bigskip

We consider now even larger values of $\alpha$ and we observe that for all $0<\alpha<2$ it
holds true that,
\begin{equation*}
  \bfv\otimes\bfv \in L^{\frac{9}{2\alpha}}(\Omega)\subset
  L^{\frac{9}{4}}(\Omega)\subset L^{1}(\Omega),
\end{equation*}
hence, we can reformulate the problem with the convective term written as follows
\begin{equation*}
  (\nabla\bfv)\bfv=\Div(\bfv\otimes\bfv),
\end{equation*}
and consider the following notion of weak solution
\begin{definition}
  \label{def:weak3}
  We say that $\bfv\in W^{1,3}_{0,\sigma}(\Omega,d^\alpha)$ is a weak solution
  to~\eqref{eq:BL-steady-generalized} if
  \begin{equation*}
    -  \int_\Omega\bfv\otimes\bfv:\nabla\bfphi+
    d^\alpha\,|\bfomega|\bfomega\cdot\curl\bfphi\,\mathrm{d}\bfx=
    -\int_\Omega\bfF\cdot\nabla\bfphi\,\mathrm{d}\bfx\qquad\forall\,\bfphi\in  
    C^\infty_{0,\sigma}(\Omega).
  \end{equation*}
\end{definition}
A similar argument can be used also to prove the following result, changing the notion of
weak solution.
 \begin{theorem}
\label{thm:thm_existence_approximation-3}
Let $0\leq \alpha<2$ and suppose that $\bff=\Div\bfF$ with $\bfF\in
L^{3/2}(\Omega,d^{-\alpha/2})$. Then, there exists a weak solution $\bfv\in
W^{1,3}_{0,\sigma}(\Omega,d^\alpha)$ of the problem~\eqref{eq:BL-steady} in the sense of
Definition~\ref{def:weak3}.
\end{theorem}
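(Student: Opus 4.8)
The plan is to mimic the proof of Theorem~\ref{thm:existence} verbatim, with only the bookkeeping of the weight exponent changed, and to exploit the fact that the weak formulation in Definition~\ref{def:weak3} never requires testing with $\bfv$ itself, so the energy equality (and the associated restriction $\alpha<6/5$ or $\alpha<3/2$) is not needed. First I would set up the same $\epsilon$-regularisation~\eqref{eq:approximation} with $d$ replaced by $d^\alpha$, and invoke the analogue of Theorem~\ref{thm:existence-approximation}: testing with $\bfve$ and estimating the right-hand side as $\int_\Omega d^{-\alpha/2}\bfF\cdot d^{\alpha/3}\nabla\bfve\,\mathrm{d}\bfx$ via H\"older, then using~\eqref{eq:grad-curl-weighted} (valid since $\alpha<2$) and Young, gives
\begin{equation*}
  \epsilon\|\bfve\|_{W^{1,3}}^3+\int_\Omega d^\alpha|\bfoe|^3+d^\alpha|\nabla\bfve|^3\,\mathrm{d}\bfx\leq C\|\bfF\|_{3/2,d^{-\alpha/2}}^{3/2},
\end{equation*}
uniformly in $\epsilon$. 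From this and the computation already displayed before the statement (H\"older against $d^{-q/(3-q)}\in L^1$ for $q<3/(1+\alpha)$) one gets a uniform $W^{1,q}_{0,\sigma}(\Omega)$ bound for some $q>1$, hence weak limits $\bfv^m:=\bfv_{\epsilon_m}\rightharpoonup\bfv$ in $W^{1,q}_{0,\sigma}(\Omega)$, strong convergence in $L^r_\sigma(\Omega)$ for $r$ up to the Sobolev exponent $9/\alpha$ coming from Lemma~\ref{lem:Sobolev}, and a.e. convergence.

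Next I would pass to the limit in the linear-in-test-function weak formulation. The point of Definition~\ref{def:weak3} is that the convective term is written as $-\int_\Omega\bfv^m\otimes\bfv^m:\nabla\bfphi\,\mathrm{d}\bfx$, and since $\bfv^m\to\bfv$ strongly in $L^{r}(\Omega)$ for every $r<9/\alpha$ and $9/\alpha>2$ because $\alpha<2$, the bilinear term $\bfv^m\otimes\bfv^m\to\bfv\otimes\bfv$ in $L^{9/(2\alpha)}(\Omega)\subset L^1(\Omega)$, which is enough against $\nabla\bfphi\in C^\infty_0$. The viscous penalty term vanishes by the analogue of~\eqref{d}. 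This yields the limit system~\eqref{eq:limit} with $d$ replaced by $d^\alpha$, where $\overline\bfS$ is the $L^{3/2}_{loc}(\Omega)$-weak limit of $d^\alpha|\bfoe|\bfoe$ (local bound from the a priori estimate, exactly as in the remark on the Muckenhoupt class), so the whole remaining task is the identification $\overline\bfS=d^\alpha|\bfomega|\bfomega$.

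For that identification I would repeat the localisation argument of Section~\ref{sec:Classical-BL}: fix a ball $B$ with $\overline{2B}\Subset\Omega$, a bump $\eta$ as in~\eqref{eq:bump}, set $\bfw^m:=\eta(\bfv^m-\bfv)-\Bog_{2B}(\nabla\eta\cdot(\bfv^m-\bfv))\in W^{1,3}_{0,\sigma}(2B)$ using that $\nabla\bfv^m$ is bounded in $L^3(K)$ on compacts by the weighted estimate, so $\bfw^m\to0$ in $L^3(2B)$, $\bfw^m\rightharpoonup0$ in $W^{1,3}(2B)$, and the Bogovski\u\i{} corrector tends to $0$ in $W^{1,3}_0(2B)$. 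Testing the difference of the $\epsilon_m$-system and~\eqref{eq:limit} with $\bfw^m$ and expanding exactly as in the six-term decomposition (III) there, the only genuinely new bookkeeping is that the convective contribution is now $\int_\Omega(\bfv^m\otimes\bfv^m-\bfv\otimes\bfv):\nabla\bfw^m\,\mathrm{d}\bfx$, which vanishes because $\bfv^m\otimes\bfv^m\to\bfv\otimes\bfv$ in $L^{9/(2\alpha)}\subset L^{3/2}$ (as $\alpha<3$) while $\nabla\bfw^m$ is bounded in $L^3(2B)$; all other terms vanish verbatim. Monotonicity of $\bfomega\mapsto d^\alpha|\bfomega|\bfomega$ (the Lemma before the proof of Theorem~\ref{thm:easy}, with $p=3$ and the bounded function $d^\alpha$) plus $\eta\geq0$ and $\eta\equiv1$ on $B$ give $\int_B(d^\alpha|\bfon|\bfon-d^\alpha|\bfomega|\bfomega)\cdot(\bfon-\bfomega)\,\mathrm{d}\bfx\to0$, hence $\bfon\to\bfomega$ in $L^3(B)$, and since $B$ is arbitrary, $\overline\bfS=d^\alpha|\bfomega|\bfomega$ by a diagonal/uniqueness-of-weak-limit argument using the nested exhaustion $K_n=\{d\geq1/n\}$, after which Vitali's theorem upgrades the local convergence to the global stress identity. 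The main obstacle is purely bookkeeping: checking that the exponents $9/(2\alpha)$, $3/(1+\alpha)$ and $9/\alpha$ all stay in the admissible ranges for every $\alpha\in[0,2)$ — which they do, strictly, precisely because the energy equality is no longer demanded — and making sure the localised test function $\bfw^m$ is admissible, i.e. that $|\bfon|\bfon,|\bfomega|\bfomega\in L^{3/2}_{loc}(\Omega)$, which follows from the weighted local bound as before.
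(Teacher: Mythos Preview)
Your proposal is correct and follows essentially the same approach as the paper, which itself gives no explicit proof for this theorem beyond the remark that ``a similar argument can be used\ldots changing the notion of weak solution''. You have correctly identified the key point: writing the convective term in divergence form $\Div(\bfv\otimes\bfv)$ and using Definition~\ref{def:weak3} removes the need to test with $\bfv$ itself, so the exponent restrictions $\alpha<6/5$ or $\alpha<3/2$ of Theorem~\ref{thm:thm_existence_approximation-2} disappear and only the Muckenhoupt threshold $\alpha<p-1=2$ (needed for~\eqref{eq:grad-curl-weighted}) remains. The bookkeeping you record --- uniform $W^{1,q}_{0,\sigma}$ bound for $q<3/(1+\alpha)$, weighted Sobolev bound in $L^{9/\alpha}$ via Lemma~\ref{lem:Sobolev}, hence $\bfv^m\otimes\bfv^m\to\bfv\otimes\bfv$ strongly in $L^{3/2}(\Omega)$ (since $9/(2\alpha)>3/2$ for $\alpha<3$), and local $L^3$ bounds on $\nabla\bfv^m$ --- is exactly what the paper's scheme requires, and the localised monotonicity argument with $\bfw^m$ goes through unchanged. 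One cosmetic point: you should state explicitly that the uniform weighted bound also yields $\bfv\in W^{1,3}_{0,\sigma}(\Omega,d^\alpha)$ by weak lower semicontinuity, since this is the function space claimed in the statement.
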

\begin{remark}
  The same reasoning can be used to handle the problem~\eqref{eq:1.1.1} below with
  $\frac{9}{5}<p<3$ and any $\alpha<p-1$. The important observation is that we still have
  $\bfv \in W^{1,p}_{\sigma}(K)$ for all $K\Subset \Omega$ and hence $\bfv\otimes\bfv \in
  L^{p^{*}/2}_{Loc}(\Omega)$. The convergence of the nonlinear stress tensor follows in
  the same way as before as well.
\end{remark}
\subsection{Extension to  values of $p$ smaller than $\frac{9}{5}$}
We now study what happens in the case of  a model with smaller values of $p$, hence we
consider the generic system
\begin{align}
\label{eq:1.1.1}
  \left\{\begin{aligned} 
\Div(\bfv\otimes\bfv)
+\curl\big(d^\alpha|\bfomega|^{p-2}\bfomega\big)
      +\nabla \pi&=\Div\bfF& \mbox{in        $\Omega$,}
      \\
      \Div \bfv&=0\quad& \mbox{in $\Omega$,}
      \\
      \bfv&=\mathbf{0}\quad\quad& \mbox{ \,on $\partial\Omega$,}
\end{aligned}\right.
\end{align}
with $1<p<3$ and $0\leq\alpha<p-1$.
\begin{definition}
  \label{def:weak4}
We say that $\bfv\in W^{1,p}_{0,\sigma}(\Omega,d^\alpha)$ is a weak solution
to~\eqref{eq:1.1.1} if
  \begin{equation*}
  -  \int_\Omega\bfv\otimes\bfv:\nabla\bfphi+
    d^\alpha\,|\bfomega|^{p-2}\bfomega\cdot\curl\bfphi\,\mathrm{d}\bfx=-\int_\Omega\bfF\cdot\nabla\bfphi\,\mathrm{d}\bfx
    \qquad\forall\,\bfphi\in 
C^\infty_{0,\sigma}(\Omega).
  \end{equation*}
\end{definition}
We obtain the following result
\begin{theorem}
  Let $p>\frac{6}{5}$, $0\leq \alpha<p-1$, and suppose that $\bff=\Div\bfF$ with $\bfF\in
L^{p'}(\Omega,d^{-\alpha/(p-1)})$. Then, there exists a weak solution $\bfv\in
W^{1,p}_{0,\sigma}(\Omega,d^\alpha)$ of the problem~\eqref{eq:1.1.1} in the sense of
Definition~\ref{def:weak4}.
\end{theorem}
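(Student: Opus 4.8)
The strategy is to combine the $\epsilon$-regularization used in the previous sections with the solenoidal Lipschitz truncation of Theorem~\ref{thm:remlip}, which is precisely the tool designed to handle exponents $p$ below the monotonicity threshold $\frac 95$. First I would introduce the approximate system
\begin{align*}
  -\epsilon\,\Div\big(|\bfD\bfve|^{p_0-2}\bfD\bfve\big)+\Div(\bfve\otimes\bfve)
  +\curl\big(d^\alpha|\bfoe|^{p-2}\bfoe\big)+\nabla\pi_\epsilon=\Div\bfF
\end{align*}
(in $\Omega$, with $\Div\bfve=0$ and $\bfve=\mathbf 0$ on $\partial\Omega$), where $p_0:=\max\{p,2\}$ is chosen large enough that the $\epsilon$-term provides a coercive principal part in a space embedding into $L^2$, so the convective term makes sense at the approximate level. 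Existence of $\bfve\in W^{1,p_0}_{0,\sigma}(\Omega)$ follows from the classical theory of Lions/Lady\v zhenskaya exactly as in Theorem~\ref{thm:existence-approximation}. Testing with $\bfve$ and using Young's inequality to absorb $\int_\Omega\bfF:\nabla\bfve$ via the H\"older split $d^{-\alpha/p}\bfF\cdot d^{\alpha/p}\nabla\bfve$ together with~\eqref{eq:grad-curl-weighted} gives the $\epsilon$-uniform bound $\epsilon\|\bfve\|_{W^{1,p_0}}^{p_0}+\int_\Omega d^\alpha|\bfoe|^p\dx\le C\|\bfF\|_{p',d^{-\alpha/(p-1)}}^{p'}$, and hence, by the now-familiar H\"older argument, $\|\nabla\bfve\|_{q}\le C$ for all $q<\frac{3p}{3+\alpha-\alpha p}$ (call this exponent $q_\alpha$) and, via Lemma~\ref{lem:Sobolev} with $\delta=\alpha/p$, $\|\bfve\|_{r}\le C$ for $r$ up to $\frac{3p}{3-p+\alpha}$; one checks that $p>\frac 65$ and $\alpha<p-1$ guarantee $\bfve\otimes\bfve$ is uniformly bounded in $L^t$ for some $t>1$, so the reformulated convective term is well defined uniformly.

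Next I would extract the limit: along a subsequence $\bfvm:=\bfv_{\epsilon_m}\rightharpoonup\bfv$ in $W^{1,q}_{0,\sigma}(\Omega)$ for $q<q_\alpha$, $\bfvm\to\bfv$ in $L^r(\Omega)$ for the relevant $r$ and a.e., $\epsilon_m|\bfD\bfvm|^{p_0-2}\bfD\bfvm\to\mathbf 0$, and $d^\alpha|\bfom|^{p-2}\bfom\rightharpoonup\overline\bfS$ in $L^{p'}_{loc}(\Omega)$ (using that on each compact $K\Subset\Omega$ the weight $d^\alpha$ is bounded below, so $\|\nabla\bfvm\|_{L^p(K)}$ and $\|\bfom\|_{L^p(K)}$ are bounded, giving local $W^{1,p}$ weak convergence as in~\eqref{conv:loc}). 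This yields the limit system $\Div(\bfv\otimes\bfv)+\curl\overline\bfS+\nabla\pi=\Div\bfF$ in $\mathcal D'(\Omega)$, and the only remaining task is the identification $\overline\bfS=d^\alpha|\bfomega|^{p-2}\bfomega$.

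For the identification I would localize exactly as in Section~\ref{sec:Classical-BL}: fix a ball $B$ with $\overline{2B}\Subset\Omega$, a bump function $\eta$ as in~\eqref{eq:bump}, and extend $\bfvm-\bfv$ (restricted near $B$) to a $W^{1,p}_{0,\sigma}(2B)$ null sequence. Here, instead of the Bogov\-skii-corrected cutoff, I would apply Theorem~\ref{thm:remlip} to this null sequence to obtain solenoidal Lipschitz truncations $\bfu^{m,j}\in W^{1,\infty}_{0,\sigma}(2B)$ with $\bfu^{m,j}=\bfv^m-\bfv$ off the small set $\mathcal O^{m,j}$, $\|\nabla\bfu^{m,j}\|_\infty\le c\lambda^{m,j}$, $\bfu^{m,j}\to 0$ in $L^\infty$, $\nabla\bfu^{m,j}\weakastto 0$, and the key smallness~\ref{itm:remlip5}. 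Testing the difference of the $\epsilon_m$-system and the limit system with $\eta^{q'}\bfu^{m,j}$ (or simply $\bfu^{m,j}$, which is already admissible since it is supported in $2B$ and divergence-free, so no pressure appears) and passing $m\to\infty$ for fixed $j$: the convective terms vanish because $\bfvm\otimes\bfvm\to\bfv\otimes\bfv$ in $L^t$ and $\nabla\bfu^{m,j}\weakastto 0$; the $\epsilon_m$-term vanishes by~\eqref{d}-type convergence and the $L^\infty$ bound on $\nabla\bfu^{m,j}$; and the stress term reduces, on the good set $\Rdr\setminus\mathcal O^{m,j}$, to $\int(d^\alpha|\bfom|^{p-2}\bfom-\overline\bfS)\cdot\curl(\bfvm-\bfv)$, whose limsup is controlled — using H\"older with exponents $p,p'$, the uniform $L^p_{loc}$ bound on $\bfom$ and $\overline\bfS$, and~\ref{itm:remlip5} — by $c\,2^{-j/p}$. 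This is the standard $\theta$-step of the Lipschitz-truncation method: one concludes that
$$\int_B \big(d^\alpha|\bfom|^{p-2}\bfom-d^\alpha|\bfomega|^{p-2}\bfomega\big)\cdot(\bfom-\bfomega)^\theta\dx\to 0$$
for $\theta\in(0,1)$, whence $\bfom\to\bfomega$ a.e. in $B$ and, by Vitali (using the uniform $L^{p'}_{loc}$ bound on $d^\alpha|\bfom|^{p-2}\bfom$ and a.e. convergence), $\overline\bfS=d^\alpha|\bfomega|^{p-2}\bfomega$ on $B$; arbitrariness of $B$ finishes the proof. The main obstacle is exactly this last step: unlike the $p\ge\frac 95$ case, one cannot test with $\bfv$ globally (the solution is only in $W^{1,q}$ for $q<\frac 32$) nor even with $\eta(\bfvm-\bfv)$ directly, so the delicate bookkeeping of the Lipschitz-truncation error terms — in particular verifying that the $\epsilon_m$-regularizing term and the convective term do not spoil the smallness estimate when tested against $\bfu^{m,j}$ — is where the real work lies; everything else is a transcription of the arguments already carried out in Sections~\ref{sec:Classical-BL} and~\ref{sec:Generalised-BL}.
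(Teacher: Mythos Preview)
Your approach is essentially the same as the paper's: $\epsilon$-regularize, derive the $\epsilon$-uniform weighted a~priori estimate, pass to the limit, and identify the nonlinear stress locally via the solenoidal Lipschitz truncation and the $\theta$-trick. Two remarks are in order.

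First, a small but real point about the localization. You write that you will ``extend $\bfvm-\bfv$ (restricted near $B$) to a $W^{1,p}_{0,\sigma}(2B)$ null sequence'' and then apply Theorem~\ref{thm:remlip} ``instead of the Bogovski\u{\i}-corrected cutoff''. But producing that compactly supported solenoidal null sequence \emph{is} the Bogovski\u{\i} step: the paper sets $\bfw^{m}=\eta(\bfv^{m}-\bfv)-\Bog_{2B}(\nabla\eta\cdot(\bfv^{m}-\bfv))\in W^{1,p}_{0,\sigma}(2B)$ and only then feeds $\bfw^{m}$ into Theorem~\ref{thm:remlip} to obtain $\bfw^{m,j}$. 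The truncation does not replace the Bogovski\u{\i} correction; both are used in series. Without the correction (or an equivalent device) you have no legitimate input for Theorem~\ref{thm:remlip}, and once you include it you also pick up the extra terms $(III)$, $(IV)$ coming from $\nabla\eta\times(\bfv^{m}-\bfv)$ and $\curl\Bog_{2B}(\dots)$, which must be shown to vanish separately. This is routine but should not be suppressed.

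Second, two cosmetic differences from the paper. You regularize with exponent $p_{0}=\max\{p,2\}$ rather than $p$; this is a harmless (arguably cleaner) variant, since the paper's choice of the same $p$ forces it to invoke the Lipschitz-truncation existence theory of \cite{DMS,FMS} already at the approximate level, whereas your $p_{0}\geq 2$ makes the approximate problem classical. Your exponent $q_\alpha=\tfrac{3p}{3+\alpha-\alpha p}$ for the unweighted gradient bound is miscomputed; the H\"older argument gives $q<\tfrac{p}{\alpha+1}$, as in the paper. Finally, in your displayed $\theta$-limit the exponent $\theta$ should sit on the full nonnegative integrand $\big((\cdot)\cdot(\cdot)\big)^{\theta}$, not on the second factor alone.
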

\begin{proof}
  As before in the previous proofs we regularize~\eqref{eq:1.1.1} and consider the system
  \begin{equation}
    \left\{\begin{aligned} 
        -\epsilon\,\Div|\bfD\bfve|^{p-2}\bfD\bfve+
        (\nabla\bfve)\bfve+\hspace{1cm}&&
        \\
        \curl\big(d^\alpha|\bfoe|^{p-2}\bfoe\big) 
        +\nabla \pi&=\Div\bfF& \mbox{in        $\Omega$,}
        \\
        \Div \bfve&=0\quad& \mbox{in $\Omega$,}
        \\
        \bfve&=\mathbf{0}\quad\quad& \mbox{ \,on $\partial\Omega$,}
      \end{aligned}\right.\label{eq:approximationp}
  \end{equation}
  and we can follow the same procedure to prove existence of the approximate system, at
  least for $p>6/5$, following the approach from M\'alek and Steinhauer \textit{et
    al.}~\cite{DMS,FMS}.  
  Also, we obtain uniform estimate
    \begin{equation*}
    \epsilon\|\bfve\|_{W^{1,p}}^3+\int_\Omega d^{\alpha}\,|\bfoe|^p\,\mathrm{d}\bfx\leq
    C(\Omega,\bfF), 
\end{equation*}
   which yields
    \begin{align}
    \bfve&\rightharpoonup \bfv\qquad\text{in }W^{1,q}_{0,\sigma}(\Omega)\qquad
    \forall\,q<\frac{p}{\alpha+1}\label{eq:A}
    \\
    \bfve&\rightarrow \bfv\qquad\text{in
    }L^{r}_{\sigma}(\Omega)\qquad\forall\,r<\frac{3p}{3\alpha+3-p}\label{eq:B}
    \\
    \bfve&\rightarrow \bfv\qquad \text{a.e. in }\Omega,\label{eq:C}
    \\
    \varepsilon|\bfD\bfve|^{p-2}\bfD\bfve&\rightarrow \mathbf{0}\qquad\text{in
    }L^{p'}(\Omega).\label{eq:D}\\
\label{eq:E}
       (\nabla \bfve)_{|K}&\rightharpoonup \nabla\bfv_{|K}\qquad\text{in }L^{p}(K)\qquad
      \forall\,K\Subset\Omega,
      \\
      (\bfve)_{|K}&\rightarrow \bfv_{|K}\qquad\text{in }L^{r}(K)\qquad\forall\,r<\frac{3p}{3-p}.\label{eq:F}
  \end{align}
%
%
  Based on the previous observations we obtain the limit system
  \begin{align*}
    \left\{
      \begin{array}{rc} \Div(\bfv\otimes\bfv)+\curl \overline\bfS+\nabla \pi=\Div\bfF&
        \mbox{in $\Omega$,}
        \\
        \Div \bfv=0\qquad& \mbox{in $\Omega$,}
        \\
        \bfv=0\qquad\quad& \mbox{ \,on $\partial\Omega$,}
      \end{array}
    \right.
  \end{align*}
  where the first equation has to be understood in the sense of distributions.
  Here the limit is taken along some sequence $\epsilon_{m}\to0$ and for simplicity we set
  \begin{equation*}
    \bfv^{m}:=\bfv_{\epsilon_{m}}\qquad \text{and}\qquad \bfon:=\bfomega_{\epsilon_{m}}.
  \end{equation*}
  Here $\overline\bfS$ denotes the weak limit of $d^\alpha|\bfon|^{p-2}\bfon$ which exists
  in $L^{p'}_{loc}(\Omega)$. The remaining effort is to show that
  $\overline\bfS=d^\alpha|\bfomega|\bfomega$, i.e.
  \begin{align}
    \label{eq:n3.18}
    \big\langle d^{\alpha}|\bfon|^{p-2}\bfon, \ep(\bfphi)\big\rangle & \to
    \big\langle d^{\alpha}|\bfom|^{p-2}\bfom, \ep(\bfphi)\big\rangle \quad\forall\,\bfphi
    \in C_{0,\sigma}^\infty(\Omega).
  \end{align}
  It suffices to prove that $\bfon \to \bfom$ almost everywhere.  This follows from the strict
  monotonicity of the operator $\bfxi\mapsto|\bfxi|^{p-2}\bfxi$ provided that for a
  certain $\theta\in (0, 1]$ and every ball $B \subset \Omega$ with $4B \subset \Omega$
  \begin{align}
    \label{eq:n3.20}
    \limsup_{m\to\infty}\int_B \Big( |\bfon|^{p-2}\bfon - |\bfom|^{p-2}\bfom) \cdot( \bfon
    - \bfom ) \Big)^{\theta} \, d\bfx = 0 \,.
  \end{align}
  To verify equation~\eqref{eq:n3.20}, let $\eta \in C^\infty_0(2B)$ be as
  in~\eqref{eq:bump}, with $B$ now such that $4B\Subset\Omega$. Define 
  \begin{align*}
    \bfw^{m} := \eta\, (\bfv^{m} - \bfv) - \Bog_{2B}( \nabla \eta \cdot (\bfv^{m} - \bfv)),
  \end{align*}
  where $\Bog_{2B}$ is the \Bogovskii{} operator on $2B$ from $L^p_0(2B)$ to
  $W^{1,p}_0(2B)$. Since $\nabla \eta \cdot (\bfv^{m} - \bfv)$ is bounded in $L^p_0(2B)$
  by~\eqref{eq:F}, we have that $\bfw^{m}$ is bounded in
  $W^{1,p}_{0,\sigma}(2B)$. Moreover, $\bfv^{m} \to \bfv$ in $L^2(2B)$ and the continuity
  of $\Bog_{2B}$ implies $\bfw^{m} \to \mathbf{0}$ at least in $L^1(2B)$. In particular,
  we can apply the solenoidal Lipschitz truncation of Theorem~\ref{thm:remlip} to
  construct a suitable double sequence $\bfw^{m,j} \in W^{1,\infty}_{0,\sigma}(4B)$.

  We use now $\bfw^{m,j}$ as a test function in~\eqref{eq:approximationp} and obtain
  \begin{align*}
    \begin{aligned}
      \langle d^{\alpha} |\bfon|^{p-2}\bfon &-d^{\alpha} |\bfom|^{p-2}\bfom, \curl(\bfw^{m,j})\rangle =
      - \langle d^\alpha|\bfom|^{p-2}\bfom, \curl(\bfw^{m,j})\rangle
      \\
      &\hphantom{=} - \epsilon_{m} \langle|\bfD\bfv^m|^{p-2}\bfD\bfv^{m}, \bfD\bfw^{m,j}) \rangle + \langle
      \bfF,\nabla\bfw ^{m,j} \rangle
      \\
      &\hphantom{=} + \langle\bfv^m \otimes \bfv^m, \nabla\bfw^{m,j}\rangle.
    \end{aligned}
  \end{align*}
  It follows from the properties of $\bfw^{m,j}$ and $\bfv^m$ that the right-hand side
  converges for fixed $j$ to zero as $m\rightarrow\infty$. So we get
  \begin{align*}
    \lim_{m \to \infty} \langle d^\alpha |\bfon|^{p-2}\bfon - d^\alpha|\bfom|^{p-2}\bfom,
    \curl(\bfw^{m,j})\rangle =0. 
  \end{align*}

  We decompose the set $4B$ into $\set{\bfw^{m}\not= \bfw^{m,j}}$ and $4B \cap
  \set{\bfw^{m}= \bfw^{m,j}}$ to get
  \begin{align*}
    (I) &:= \biggabs{\int\limits_{4B \cap \set{\bfw^n = \bfw^{m,j}}}
      \mspace{-40mu} \eta\, d^{\alpha}\,\Big( |\bfon|^{p-2}\bfon - |\bfom|^{p-2}\bfom\Big) \cdot (\bfon
      - \bfom)\,\mathrm{d}\bfx}
    \\
    &= \biggabs{\int\limits_{\set{\bfw^n \not= \bfw^{m,j}}}
      \mspace{-25mu} d^{\alpha}\Big( |\bfon|^{p-2}\bfon - |\bfom|^{p-2}\bfom\Big) \cdot
      \curl(\bfw^{m,j})\,\mathrm{d}\bfx}
    \\
    &+  \biggabs{\int\limits_{4B \cap \set{\bfw^n =
          \bfw^{m,j}}} \mspace{-30mu}d^{\alpha}\Big( |\bfon|^{p-2}\bfon - |\bfom|^{p-2}\bfom\Big)
      \cdot \big( \nabla \eta \times (\bfv^{m} - \bfv) \big)\,\mathrm{d}\bfx}
    \\
    &+ \biggabs{\int\limits_{4B \cap \set{\bfw^n =
          \bfw^{m,j}}} \mspace{-30mu} d^{\alpha}\Big( |\bfon|^{p-2}\bfon - |\bfom|^{p-2}\bfom\Big)
      \cdot \curl\big( \Bog_{2B}( \nabla \eta \cdot (\bfv^{m} - \bfv)) \big)\,\mathrm{d}\bfx}
    \\
    &=: (II) + (III) + (IV).
  \end{align*}
  Since $\nabla \eta \otimes (\bfv^{m} - \bfv) \stackrel{m}{\to} 0$ in $L^p(2B)$, we have
  $(III) + (IV) \stackrel{m} \to 0$, recall~\eqref{eq:E} and~\eqref{eq:F}. Note that we also used the
  continuity of $\Bog_{2B}$ from $L^p_0(2B)$ to $W^{1,p}_0(2B)$.

  By H\"older's inequality,~\eqref{eq:E} and Theorem~\ref{thm:remlip}-\ref{itm:remlip5}
  \begin{align*}
    (II) &\leq \limsup_{m\to+\infty} \big(\norm{\bfon}_{p'} + \norm{\bfom}_{p'}\big)\,
    \norm{\chi_{\set{\bfw^n\not= \bfw^{m,j}}}\nabla\bfw^{m,j}}_p
    \\
    &\leq \,c 2^{-j/p}\|\nabla\bfw^{m}\|_p\leq \,c 2^{-j/p}.
  \end{align*}
  Overall we get
  \begin{align*}
    \limsup_{m\to+\infty} \biggabs{\int\limits_{4B \cap \set{\bfw^{m} = \bfw^{m,j}}}
      \mspace{-40mu} \eta\, d^\alpha\,\big( |\bfon|^{p-2}\bfon - |\bfom|^{p-2}\bfom\big) \cdot(\bfon
      - \bfom)\,\mathrm{d}\bfx} \leq c\, 2^{-j/p}.
  \end{align*}
  This implies
  \begin{align*}
    \label{eq:4}
    \limsup_{m\to+\infty} \int\limits_{4B} \Big( \eta\, d^\alpha\,\big( |\bfon|^{p-2}\bfon -
    |\bfom|^{p-2}\bfom\big) \cdot(\bfon - \bfom) \Big)^\theta \,\mathrm{d}\bfx = 0
  \end{align*}
  for any $\theta\in(0,1)$ as a consequence of~\eqref{eq:E} and
  Theorem~\ref{thm:remlip}-\ref{itm:remlip5}.  Now,~\eqref{eq:n3.20} follows form $\eta
  \geq\chi_B$ and $d\geq C_B>0$ in $B$. So we obtain~\eqref{eq:n3.18} as desired, which
  finishes the proof. 
\end{proof}
\begin{remark}
  We are not considering here problems of regularity of the weak solutions and also of
  less regular weight functions as in the recent studies by Cirmi, D'Asero, and
  Leonardi~\cite{CDL2020}. Moreover, as it is the case for similar problems, uniqueness
  for the system~\eqref{eq:BL-steady} is not known, even for small enough
  solutions. Uniqueness of small solutions to~\eqref{eq:BLM} follows directly by the same
  results for the Navier-Stokes equations, as explained in Galdi~\cite{Gal2011}.  On the
  other hand uniqueness of small solutions --even for the regularized
  system~\eqref{eq:approximationp}-- is not known for $p>2$ or for $p<\frac{9}{5}$, see
  Blavier and Mikeli\'{c}~\cite{BM1995} and the review in Galdi~\cite{Gal2008}.
\end{remark}

\def\cprime{$'$} \def\ocirc#1{\ifmmode\setbox0=\hbox{$#1$}\dimen0=\ht0
  \advance\dimen0 by1pt\rlap{\hbox to\wd0{\hss\raise\dimen0
  \hbox{\hskip.2em$\scriptscriptstyle\circ$}\hss}}#1\else {\accent"17 #1}\fi}
  \def\polhk#1{\setbox0=\hbox{#1}{\ooalign{\hidewidth
  \lower1.5ex\hbox{`}\hidewidth\crcr\unhbox0}}}
\section*{References}

\end{document}